\newtheorem{theorem}{Theorem}[section]
\newtheorem{lemma}[theorem]{Lemma}
\newtheorem{proposition}[theorem]{Proposition}
\theoremstyle{plain}
\theoremstyle{definition}
\newtheorem{remark}[theorem]{Remark}
\numberwithin{equation}{section}
\renewcommand{\labelenumi}{\textup{(\theenumi)}}
\newcommand{\Homeo}{\operatorname{Homeo}}
\newcommand{\id}{\operatorname{id}}
\newcommand{\Ker}{\operatorname{Ker}}
\newcommand{\sgn}{\operatorname{sgn}}
\newcommand{\Ad}{\operatorname{Ad}}
\newcommand{\Z}{\mathbb{Z}}
\newcommand{\T}{\mathbb{T}}
\def\R{{\mathcal{R}}}
\def\WRA{{\widetilde{\R}_A}}
\title{ $C^*$-algebras associated with  asymptotic equivalence relations defined by
 hyperbolic toral automorphisms }
\author{Kengo Matsumoto \\
Department of Mathematics \\
Joetsu University of Education \\
Joetsu, 943-8512, Japan
}
\begin{document}
\maketitle

\date{}

\def\det{{{\operatorname{det}}}}

\begin{abstract}
We study the $C^*$-algebras of the \'etale groupoids defined by 
 the asymptotic equivalence relations for hyperbolic automorphisms on the two-dimensional torus.
The algebras are proved to be four-dimensional non-commutative tori
by an explicit numerical computation. 
The ranges of the unique tracial states of its $K_0$-groups of the $C^*$-algebras
 are described in terms of the hyperbolic matrix of the automorphism on the torus.
\end{abstract}

2020{\it Mathematics Subject Classification}:
 Primary 37D20, 37A55; Secondary 46L35.

{\it Keywords and phrases}: hyperbolic toral automorphisms, Smale space,
asymptotic equivalence relation, \'etale groupoid, non-commutative tori.

\def\R{{\mathcal{R}}}
\def\OA{{{\mathcal{O}}_A}}
\def\OB{{{\mathcal{O}}_B}}
\def\RA{{{\mathcal{R}}_A}}
\def\WRA{{\widetilde{\R}_A}}
\def\WRB{{\widetilde{\R}_B}}
\def\WRC{{\widetilde{\R}_C}}
\def\RAC{{{\mathcal{R}}_A^\circ}}
\def\RAR{{{\mathcal{R}}_A^\rho}}
\def\RTA{{{\mathcal{R}}_{A^t}}}
\def\RB{{{\mathcal{R}}_B}}
\def\FA{{{\mathcal{F}}_A}}
\def\FB{{{\mathcal{F}}_B}}
\def\FA{{{\mathcal{F}}_A}}
\def\FTA{{{\mathcal{F}}_{A^t}}}
\def\FB{{{\mathcal{F}}_B}}
\def\FTB{{{\mathcal{F}}_{B^t}}}
\def\DTA{{{\mathcal{D}}_{{}^t\!A}}}
\def\FDA{{{\frak{D}}_A}}
\def\FDTA{{{\frak{D}}_{{}^t\!A}}}
\def\FFA{{{\frak{F}}_A}}
\def\FFTA{{{\frak{F}}_{A^t}}}
\def\FFB{{{\frak{F}}_B}}
\def\OZ{{{\mathcal{O}}_Z}}
\def\V{{\mathcal{V}}}
\def\E{{\mathcal{E}}}
\def\G{{\mathcal{G}}}
\def\OTA{{{\mathcal{O}}_{A^t}}}
\def\OTB{{{\mathcal{O}}_{B^t}}}
\def\SOA{{{\mathcal{O}}_A}\otimes{\mathcal{K}}}
\def\SOB{{{\mathcal{O}}_B}\otimes{\mathcal{K}}}
\def\SOZ{{{\mathcal{O}}_Z}\otimes{\mathcal{K}}}
\def\SOTA{{{\mathcal{O}}_{A^t}\otimes{\mathcal{K}}}}
\def\SOTB{{{\mathcal{O}}_{B^t}\otimes{\mathcal{K}}}}
\def\DA{{{\mathcal{D}}_A}}
\def\DB{{{\mathcal{D}}_B}}
\def\DZ{{{\mathcal{D}}_Z}}

\def\SDA{{{\mathcal{D}}_A}\otimes{\mathcal{C}}}
\def\SDB{{{\mathcal{D}}_B}\otimes{\mathcal{C}}}
\def\SDZ{{{\mathcal{D}}_Z}\otimes{\mathcal{C}}}
\def\SDTA{{{\mathcal{D}}_{A^t}\otimes{\mathcal{C}}}}
\def\O2{{{\mathcal{O}}_2}}
\def\D2{{{\mathcal{D}}_2}}

\def\Max{{{\operatorname{Max}}}}
\def\Tor{{{\operatorname{Tor}}}}
\def\Ext{{{\operatorname{Ext}}}}
\def\Per{{{\operatorname{Per}}}}
\def\PerB{{{\operatorname{PerB}}}}
\def\Homeo{{{\operatorname{Homeo}}}}
\def\Out{{{\operatorname{Out}}}}
\def\Aut{{{\operatorname{Aut}}}}
\def\Ad{{{\operatorname{Ad}}}}
\def\Inn{{{\operatorname{Inn}}}}
\def\det{{{\operatorname{det}}}}
\def\exp{{{\operatorname{exp}}}}
\def\cobdy{{{\operatorname{cobdy}}}}
\def\Ker{{{\operatorname{Ker}}}}
\def\ind{{{\operatorname{ind}}}}
\def\id{{{\operatorname{id}}}}
\def\supp{{{\operatorname{supp}}}}
\def\co{{{\operatorname{co}}}}
\def\Sco{{{\operatorname{Sco}}}}
\def\Spm{{\operatorname{Sp}}_{\operatorname{m}}^\times}
\def\U{{{\mathcal{U}}}}
\def\GL{{{\operatorname{GL}}}}
\def\mod{{{\operatorname{mod}}}}

\section{Introduction}
In \cite{Ruelle1} and \cite{Ruelle2}, D. Ruelle has introduced the notion of Smale space.
A Smale space is a hyperbolic dynamical system with local product structure.
He has constructed groupoids and its operator algebras from the Smale spaces.
After the Ruelle's initial study,  
I. Putnam in \cite{Putnam1} (cf. \cite{KamPutSpiel}, \cite{Putnam2}, \cite{Putnam4}, \cite{PutSp}, 
\cite{Thomsen}, etc. )
constructed various groupoids from Smale spaces and studied their  $C^*$-algebras. 
The class of Smale spaces contain two important subclasses of topological dynamical systems as its typical examples.
One is the class of shifts of finite type, which are sometimes called topological Markov shifts.
The other one is the class of hyperbolic toral automorphisms.
The study of the former class from the view point of $C^*$-algebras is closely related to the study of Cuntz-Krieger algebras as in 
\cite{Holton}, \cite{KamPut}, \cite{KilPut}, \cite{MaJOT2019}, etc.
That of the latter class 
is closely related to the study of the crossed product $C^*$-algebras 
of the homeomorphisms of the hyperbolic automorphisms on the torus.

In this paper, we will focus on the study of the latter class, the hyperbolic toral automorphisms
from the view points of $C^*$-algebras constructed from the associated groupoids as Smale spaces. 
Let $A = 
\begin{bmatrix}
a & b \\
c & d
\end{bmatrix}
 \in \GL (2,\Z)$ 
be a hyperbolic matrix. 
Let
$q: \mathbb{R}^2 \longrightarrow \mathbb{R}^2/\Z^2$
be the natural quotient map.
We denote by $\mathbb{R}^2/\Z^2$ the  two-dimensional torus $\T^2$
with metric $d$ defined by
$$
d( x,y) = \inf\{ \|  z-w \|  :  q(z) =x, q(w) = y, \, z,w \in \mathbb{R}^2\} \quad \text{ for } 
x,y \in \T^2
$$  
where 
$\| \cdot \| $ is the Euclid norm on $\mathbb{R}^2.$
Then the matrix $A$ defines a homeomorphism on $\T^2$
which is called a hyperbolic toral automorphism.
It is a specific example of an Anosov diffeomorphism on a compact Riemannian manifold
(see \cite{Bowen2}, \cite{Smale}, etc.). 
Let $\lambda_u, \lambda_s$ be the eigenvalues of $A$ such that 
$|\lambda_u| > 1 > |\lambda_s|.$ 
They are both real numbers.
Let $v_u =(u_1, u_2), v_s =(s_1,s_2)$ 
be the normalized eigenvectors for $\lambda_u, \lambda_s$, respectively.
The direction along $v_u$ expands by $A$, 
whereas the direction of $v_s$ expands by $A^{-1}$.  
These directions determine local product structure which makes $\T^2$ a Smale space.
The groupoid $G_A^a$ introduced by 
D. Ruelle \cite{Ruelle1} of the asymptotic equivalence relation is defined by 
\begin{equation}
G_A^a =\{ (x, z) \in \T^2 \times \T^2 \mid 
\lim_{n\to{\infty}}d(A^nx, A^nz) = \lim_{n\to{\infty}}d(A^{-n}x, A^{-n}z) =0\} 
\end{equation}
with its unit space
\begin{equation}
(G_A^a)^{(0)} =\{ (x, x) \in \T^2 \times \T^2 \} =\T^2. 
\end{equation}
The multiplication and the inverse operation on $G_A^a$ 
are defined by 
$$
(x,z)(z,w) = (x,w), \qquad (x,z)^{-1} = (z,x) \qquad 
\text{ for } 
(x,z), (z,w) \in G_A^a.
$$
As in \cite{Putnam1}, the groupoid $G_A^a$ has a natural topology defined by inductive limit topology, 
which makes $G_A^a$ \'etale.
The \'etale groupoid $G_A^a$ is called the asymptotic groupoid 
for the hyperbolic toral automorphism $(\T^2, A)$.
We will first see that 
the groupoid $G_A^a$ is realized as a transformation groupoid 
$\T^2\rtimes_{\alpha^A}\Z^2$ 
by a certain action
$\alpha^A: \Z^2 \longrightarrow \Homeo(\T^2)$
associated to $G_A^a$, so that 
the $C^*$-algebra $C^*(G_A^a)$ of  the groupoid $G_A^a$
is isomorphic to the $C^*$-algebra of the crossed product 
$C(\T^2) \rtimes_{\alpha^A} \Z^2$ by the induced action 
$\alpha^A:\Z^2\longrightarrow \Aut(C(\T^2))$.
As the action
$\alpha^A: \Z^2 \longrightarrow \Homeo(\T^2)$
is free and minimal
having a unique invariant ergodic measure, 
a general theory of $C^*$-crossed product ensures that
$C(\T^2) \rtimes_{\alpha^A} \Z^2$ 
is a simple AT-algebra having a unique tracial state 
(cf. \cite{Phillips}, \cite{Putnam1}, \cite{PutSp} ).

Let 
$A = 
\begin{bmatrix}
a & b \\
c & d
\end{bmatrix}
 \in \GL (2,\Z)$ 
be a hyperbolic matrix which satisfies  $\det(A) = \pm 1,$
We denote by 
$\Delta(A) = (a+d)^2 - 4 (ad-bc)$ 
the discriminant of the characteristic polynomial of the matrix $A$, which is positive.
We will show the following result.
\begin{theorem} [{Theorem \ref{thm:main1}} and {Proposition \ref{prop:tauK}}] \label{thm:thm1.1}
The $C^*$-algebra $C^*(G_A^a)$ of the \'etale groupoid $G_A^a$  
for a hyperbolic matrix 
$A = 
\begin{bmatrix}
a & b \\
c & d
\end{bmatrix}
$
is a simple AT-algebra with unique tracial state $\tau$ 
that is isomorphic to the four-dimensional  non-commutative torus generated by four unitaries 
$U_1, U_2, V_1, V_2$ 
satisfying the following relations:
\begin{gather*}
U_1 U_2 = U_2 U_1,\qquad V_1 V_2 = V_2 V_1, \\
V_1 U_1 = e^{2\pi i \theta_1}U_1 V_1, \qquad V_1 U_2 = e^{2\pi i \theta_2}U_2 V_1, \\
V_2 U_1 = e^{2\pi i \theta_3}U_1 V_2, \qquad V_2 U_2 = e^{2\pi i \theta_4}U_2 V_2, 
\end{gather*}
where
\begin{equation*}
\theta_1=\frac{1}{2}( 1 + \frac{a-d}{\sqrt{\Delta(A)}}), \quad
\theta_2 =\frac{c}{\sqrt{\Delta(A)}}, \quad
\theta_3 =\frac{b}{\sqrt{\Delta(A)}}, \quad
\theta_4=\frac{1}{2}( 1 - \frac{a-d}{\sqrt{\Delta(A)}}).
\end{equation*}
The range $\tau_*(K_0(C^*(G_A^a)))$
of the tracial state $\tau$ of the $K_0$-group $K_0(C^*(G_A^a))$
of the $C^*$-algebra $C^*(G_A^a)$ is
\begin{equation}
\tau_*(K_0(C^*(G_A^a))) = \Z + \Z\theta_1 +\Z\theta_2 +\Z\theta_3 \quad\text{ in } \mathbb{R}. \label{eq:tauK0}
\end{equation}
\end{theorem}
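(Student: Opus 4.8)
The plan is to deduce the trace-range formula directly from the presentation of $C^*(G_A^a)$ as a four-dimensional noncommutative torus obtained in Theorem \ref{thm:main1}, combined with the known description of the range of the canonical trace on the $K_0$-group of a noncommutative torus. Writing $A_\Theta = C^*(G_A^a)$ with generators ordered as $(W_1,W_2,W_3,W_4) = (U_1,U_2,V_1,V_2)$ and $W_jW_k = e^{2\pi i \Theta_{jk}}W_kW_j$, the defining relations translate into the antisymmetric parameter matrix
$$
\Theta = \begin{bmatrix} 0 & 0 & -\theta_1 & -\theta_3 \\ 0 & 0 & -\theta_2 & -\theta_4 \\ \theta_1 & \theta_2 & 0 & 0 \\ \theta_3 & \theta_4 & 0 & 0 \end{bmatrix}.
$$
For a noncommutative $n$-torus $A_\Theta$ the range $\tau_*(K_0(A_\Theta))$ is the subgroup of $\mathbb{R}$ generated by $1$ together with the Pfaffians $\operatorname{Pf}(\Theta_I)$ of all even-order principal submatrices $\Theta_I$; this is the content of Elliott's computation of the Chern-character pairing on noncommutative tori, and it can also be recovered by iterating the Pimsner--Voiculescu exact sequence, using that $A_\Theta$ is built as a fourfold crossed product by $\Z$. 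I would invoke this as the main external input.

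For $n=4$ the relevant Pfaffians are: $\operatorname{Pf}(\Theta_\emptyset)=1$; the six order-two Pfaffians, which are just the entries $\Theta_{jk}$ for $j<k$, namely $0,\,-\theta_1,\,-\theta_3,\,-\theta_2,\,-\theta_4,\,0$; and the single order-four Pfaffian
$$
\operatorname{Pf}(\Theta) = \Theta_{12}\Theta_{34} - \Theta_{13}\Theta_{24} + \Theta_{14}\Theta_{23} = \theta_2\theta_3 - \theta_1\theta_4 .
$$
The decisive step is to show that this top Pfaffian vanishes. Using the identity
$$
\Delta(A) = (a+d)^2 - 4(ad-bc) = (a-d)^2 + 4bc,
$$
one computes $\theta_1\theta_4 = \tfrac14\bigl(1 - \tfrac{(a-d)^2}{\Delta(A)}\bigr) = \tfrac14\cdot\tfrac{4bc}{\Delta(A)} = \tfrac{bc}{\Delta(A)}$, while $\theta_2\theta_3 = \tfrac{c}{\sqrt{\Delta(A)}}\cdot\tfrac{b}{\sqrt{\Delta(A)}} = \tfrac{bc}{\Delta(A)}$ as well, whence $\operatorname{Pf}(\Theta)=0$. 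I expect this Pfaffian identity to be the crux of the argument: it is precisely the special feature of parameters arising from a hyperbolic toral automorphism that collapses the generically present four-dimensional contribution of the top Chern class.

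It then remains to simplify the generating set. Since $\theta_1 + \theta_4 = 1$, we have $\theta_4 = 1 - \theta_1 \in \Z + \Z\theta_1$, so $\theta_4$ contributes nothing new, and the vanishing of $\operatorname{Pf}(\Theta)$ removes the only potentially independent higher-degree term. Hence
$$
\tau_*(K_0(C^*(G_A^a))) = \Z + \Z\theta_1 + \Z\theta_2 + \Z\theta_3 + \Z\theta_4 + \Z\operatorname{Pf}(\Theta) = \Z + \Z\theta_1 + \Z\theta_2 + \Z\theta_3 ,
$$
which is the asserted formula (\ref{eq:tauK0}). The simplicity and unique-trace property recorded earlier guarantee that this subgroup is the genuine range of a well-defined trace (and in fact that $\tau_*$ is injective), though only the range itself is needed here.
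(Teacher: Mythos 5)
Your argument for the trace-range formula is correct and follows essentially the same route as the paper's own proof of Proposition \ref{prop:tauK}: both invoke Elliott's description of $\tau_*(K_0(A_\Theta))$ as the subgroup generated by $1$, the entries $\Theta_{jk}$, and the top Pfaffian, observe that the Pfaffian vanishes because $\theta_1\theta_4=\theta_2\theta_3$ (you verify this from the explicit $a,b,c,d$ formulas via $\Delta(A)=(a-d)^2+4bc$, the paper from the eigenvector expressions), and use $\theta_1+\theta_4=1$ to discard $\theta_4$. The first half of the statement (the presentation as a noncommutative torus, simplicity, the AT property and uniqueness of the trace) you take as given from Theorem \ref{thm:main1}, which is exactly how the paper assembles Theorem \ref{thm:thm1.1}.
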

We note that the slopes $\theta_i, i=1,2,3,4$
are determined by the formulas  
\eqref{eq: deftheta12}, \eqref{eq: deftheta34}
for the slopes of the eigenvectors 
$v_u =(u_1, u_2), v_s =(s_1, s_2)$ of the hyperbolic matrix $A$.

Since the \'etale groupoid $G_A^a$ is a flip cpnjugacy invariant
and the $C^*$-algebra $C^*(G_A^a)$ has a unique tracial state written $\tau$, 
we know that 
the trace value $\tau_*(K_0(C^*(G_A^a)))$
is a flip conjugacy invariant of the hyperbolic toral automorphism
$(\T^2,A).$

As commuting matrices have common eigenvectors, 
we know that 
if two matrices $A, B \in \GL(2,\Z)$ commute with each other, 
then the $C^*$-algebras 
$C^*(G_A^a)$ and $C^*(G_B^a)$ are canonically isomorphic.
Hence two matrices 
$ 
\begin{bmatrix}
1 & 1 \\
1 & 0
\end{bmatrix}
$
and
$ 
\begin{bmatrix}
2 & 1 \\
1 & 1
\end{bmatrix}
$
have the isomorphic $C^*$-algebras.
On the other hand,  
as the range $\tau_*(K_0(C^*(G_A^a)))$ of the tracial state of the $K_0$-group
$K_0(C^*(G_A^a))$ is invariant under isomorphism class of the algebra
$C^*(G_A^a)$, 
the $C^*$-algebra
$C^*(G_{A_1})$ is not isomorphic to $C^*(G_{A_2})$
for the matrices
$A_1 = 
\begin{bmatrix}
1 & 1 \\
1 & 0
\end{bmatrix}
$
and
$A_2 = 
\begin{bmatrix}
3 & 1 \\
2 & 1
\end{bmatrix}
$
(Proposition \ref{prop:example}).


\section{The groupoid $G_A^a$ and its $C^*$-algebra  $C^*(G_A^a)$}
For a vector $(m,n) \in \mathbb{R}^2$,
we write 
the vector
$(m,n)^t$
as
$\begin{bmatrix}
m\\
n
\end{bmatrix}
$ 
and sometimes identify
$(m,n)$ with 
$\begin{bmatrix}
m\\
n
\end{bmatrix}.
$ 
A matrix 
$A = 
\begin{bmatrix}
a & b \\
c & d
\end{bmatrix}
 \in \GL (2,\Z)$ 
with $\det(A) = \pm 1$ is said to be hyperbolic if
$A$ does not have eigenvalues of modulus $1$.
Let $\lambda_u, \lambda_s$ be the eigenvalues of $A$ such that 
$|\lambda_u| > 1 > |\lambda_s|.$ 
They are eigenvalues for unstable direction, stable direction, respectively.   
We note that  $b \ne 0, c \ne 0$ because of the conditions 
$ad-bc =\pm 1$ and $|\lambda_u| > 1 > |\lambda_s|.$  
Take nonzero eigenvectors $v_u, v_s$
for the eigenvalues  $\lambda_u, \lambda_s$ such that 
$\|v_u \| = \| v_s \| = 1.$
We set 
$v_u = (u_1, u_2), v_s = (s_1, s_2) \in \T^2$ as vectors.
The numbers $\lambda_u, \lambda_s, u_1, u_2, s_1, s_2$ 
are all real numbers
because of the hyperbolicity of the matrix $A$.
It is easy to see that the slopes 
$\frac{u_1}{u_2},\frac{s_1}{s_2}$ are irrational. 
We set
\begin{equation*}
r_A := \langle v_u | v_s \rangle.
\end{equation*}
Define two vectors
\begin{equation*}
v_1 := v_u -r_A v_s,\qquad
v_2 := r_A v_u -v_s.
\end{equation*}

\begin{lemma} For two vectors $x, z \in \T^2$, the following three conditions are equivalent.
\begin{enumerate}
\renewcommand{\theenumi}{\roman{enumi}}
\renewcommand{\labelenumi}{\textup{(\theenumi)}}
\item $(x,z) \in G_A^a.$
\item $ z = x + \frac{1}{1-r_A^2}\langle (m,n)|v_1\rangle v_u$
for some $m,n\in \Z.$
\item $ z = x + \frac{1}{1-r_A^2}\langle (m,n)|v_2\rangle v_s$
for some $m,n\in \Z.$
\end{enumerate}
\end{lemma}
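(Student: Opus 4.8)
The plan is to isolate the linear-algebra content of the statement from the dynamical content, and to treat (ii) $\Leftrightarrow$ (iii) purely algebraically. Since $v_u$ and $v_s$ are linearly independent unit vectors, Cauchy--Schwarz is strict, so $|r_A|<1$ and the Gram matrix of $\{v_u,v_s\}$, whose determinant equals $1-r_A^2>0$, is invertible. Writing an arbitrary lattice vector $(m,n)\in\Z^2$ in the eigenbasis as $(m,n)=Pv_u+Qv_s$ and taking inner products with $v_u$ and $v_s$, I solve the resulting $2\times 2$ system (using $v_1=v_u-r_Av_s$ and $v_2=r_Av_u-v_s$) to obtain
\begin{equation*}
(m,n)=\frac{\langle (m,n)\,|\,v_1\rangle}{1-r_A^2}\,v_u-\frac{\langle (m,n)\,|\,v_2\rangle}{1-r_A^2}\,v_s .
\end{equation*}
This identity is the crux: it shows that, for the \emph{same} pair $(m,n)$, the two displacement vectors appearing in (ii) and (iii) differ exactly by the integer vector $(m,n)$, hence represent the same point of $\T^2$. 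The equivalence of (ii) and (iii) then follows immediately.

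Next I would establish the dynamical step. Let $w\in\mathbb{R}^2$ be a lift of $z-x$. I claim the forward condition $\lim_n d(A^nx,A^nz)=0$ holds iff $w\in\mathbb{R}v_s+\Z^2$, and the backward condition $\lim_n d(A^{-n}x,A^{-n}z)=0$ holds iff $w\in\mathbb{R}v_u+\Z^2$. The ``if'' directions are immediate: if $w=tv_s+k$ with $k\in\Z^2$, then $A^nw=t\lambda_s^nv_s+A^nk\equiv t\lambda_s^nv_s \pmod{\Z^2}$, which tends to $0$ since $|\lambda_s|<1$; the unstable case is symmetric under $A\leftrightarrow A^{-1}$. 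For the ``only if'' direction I would exploit expansiveness: for large $n$ the distance $d(A^nx,A^nz)$ is smaller than half the shortest nonzero lattice vector, so there is a unique $k_n\in\Z^2$ with $\ep_n:=A^nw-k_n\to 0$. Comparing $A\ep_n=A^{n+1}w-Ak_n$ with $\ep_{n+1}=A^{n+1}w-k_{n+1}$ shows $k_{n+1}-Ak_n\in\Z^2$ tends to $0$, hence $k_{n+1}=Ak_n$ for $n$ large. Thus $\ep_n=A^{n-N}\ep_N$, and since $\ep_n\to 0$ while $A$ expands the $v_u$-component, the $v_u$-component of $\ep_N$ must vanish, giving $\ep_N\in\mathbb{R}v_s$ and $w=A^{-N}k_N+A^{-N}\ep_N\in\Z^2+\mathbb{R}v_s$.

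Finally I would combine the two characterizations: $(x,z)\in G_A^a$ iff the lift $w$ lies in $(\mathbb{R}v_s+\Z^2)\cap(\mathbb{R}v_u+\Z^2)$. A point of this intersection satisfies $w\equiv uv_u\equiv sv_s\pmod{\Z^2}$ for some $u,s\in\mathbb{R}$, so that $uv_u-sv_s\in\Z^2$; setting $(m,n):=uv_u-sv_s$ and invoking the decomposition of the first paragraph forces $u=\frac{1}{1-r_A^2}\langle(m,n)|v_1\rangle$ and $s=\frac{1}{1-r_A^2}\langle(m,n)|v_2\rangle$. Conversely, every $(m,n)\in\Z^2$ produces such a point via these formulas. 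Reading off $z=x+uv_u$ gives (ii) and $z=x+sv_s$ gives (iii), which together with the first paragraph yields all three equivalences. The routine parts are the $2\times 2$ inversion and the ``if'' directions; I expect the main obstacle to be the ``only if'' direction of the dynamical step, namely showing that mere asymptoticity of the two torus orbits forces the displacement onto a single stable (resp.\ unstable) line modulo the lattice, with the eventual rigidity $k_{n+1}=Ak_n$ of the lattice corrections as its technical heart.
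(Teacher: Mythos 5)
Your proof is correct, and its overall architecture coincides with the paper's: reduce membership in $G_A^a$ to the condition $z\equiv x+tv_u\equiv x+sv_s \pmod{\Z^2}$, observe that then $tv_u-sv_s=(m,n)\in\Z^2$, and solve the resulting $2\times 2$ Gram system to get $t=\frac{1}{1-r_A^2}\langle(m,n)|v_1\rangle$ and $s=\frac{1}{1-r_A^2}\langle(m,n)|v_2\rangle$; your eigenbasis decomposition of $(m,n)$ is exactly this computation packaged as an identity, and your converse direction (choose $s$ to match a given $t$, deduce $tv_u-sv_s\in\Z^2$) is the paper's as well. The genuine difference is that the paper simply asserts the dynamical equivalence ``$(x,z)\in G_A^a$ iff $z\equiv x+tv_u\equiv x+sv_s$'' as if immediate, whereas you prove it: the ``only if'' direction via the rigidity argument that the nearest lattice points $k_n$ to $A^nw$ eventually satisfy $k_{n+1}=Ak_n$, so that $\ep_n=A^{n-N}\ep_N$ and expansion along $v_u$ kills the unstable component. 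That argument is sound (the integer vectors $k_{n+1}-Ak_n=A\ep_n-\ep_{n+1}$ tend to $0$, hence vanish eventually), and it is the honest content of the lemma that the paper leaves to the reader; your version is therefore more self-contained, at the cost of length. No gaps.
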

\begin{proof}
For two vectors $x, z \in \T^2$
regarding them as elements of $\mathbb{R}^2$ modulo $\Z^2,$
we have $(x,z) \in G_A^a$  if and only if
\begin{equation}
z \equiv x + t v_u \equiv x + s v_s\quad  (\mod\,\,\, \Z^2) \qquad
\text{ for some } t,s \in \mathbb{R}. \label{eq:zx}
\end{equation}
In this case, we see that  $tv_u - s v_s = (m,n) $ for some $m,n\in \Z$
so that 
\begin{align}
\langle t v_u - s v_s  | v_u \rangle & = \langle (m,n) | v_u \rangle, \label{eq:tvu}\\ 
\langle t v_u - s v_s  | v_s \rangle & = \langle (m,n) | v_s \rangle \label{eq:tvs}
\end{align}
and 
we have 
\begin{equation}
t = \frac{1}{1-r_A^2}\langle (m,n)|v_1\rangle, \qquad 
s = \frac{1}{1-r_A^2}\langle (m,n)|v_2\rangle. \label{eq:tsmn} 
\end{equation}
This shows the implications
(i) $\Longrightarrow$ (ii) and  (iii).

Assume that (ii) holds.
By putting 
$s = \frac{1}{1-r_A^2}\langle (m,n)|v_2\rangle,$
we have the equalities both \eqref{eq:tvu} and \eqref{eq:tvs}, 
so that 
$tv_u - s v_s = (m,n) $.
Hence the equality \eqref{eq:zx} holds 
and we see that $(x,z)$ belongs to the groupoid $G_A^a.$
This shows that the implication 
(ii) $\Longrightarrow$ (i) holds, and similarly
(iii) $\Longrightarrow$ (i) holds.
\end{proof}
Let us define an action $\alpha^A: \Z^2 \longrightarrow \Homeo(\T^2)$  
in the following way.
We set
\begin{equation*}
\alpha^A_{(m,n)}(x) :=  x + \frac{1}{1-r_A^2}\langle (m,n)|v_1\rangle v_u, \qquad
(m,n) \in \Z^2, \, x \in \T^2.
\end{equation*}
For a fixed $(m,n) \in \Z^2,$
the map 
$x \in \T^2 \longrightarrow 
\alpha^A_{(m,n)}(x) \in \T^2$ 
is the parallel transformation along the vector  
$\frac{1}{1-r_A^2}\langle (m,n)|v_1\rangle v_u.$
Hence 
$\alpha^A_{(m,n)}$ defines a homeomorphism on the torus $\T^2$.
It is clear to see that 
$\alpha^A_{(m,n)}\circ \alpha^A_{(k,l)} = \alpha^A_{(m+k,n+l)}$ for
$(m,n), (k,l) \in \Z^2$.
\begin{lemma}\label{lem:freeminimal}
Keep the above notation. \hspace{6cm}
\begin{enumerate}
\renewcommand{\theenumi}{\roman{enumi}}
\renewcommand{\labelenumi}{\textup{(\theenumi)}}
\item 
If $\alpha^A_{(m,n)}(x) = x$ for some $x \in \T^2$, then $(m,n) =(0,0)$.
Hence the action $\alpha^A: \Z^2 \longrightarrow \Homeo(\T^2)$
is free.
\item
For $x \in \T^2$,
the set $\{\alpha^A_{(m,n)}(x) | \in (m,n) \in \Z^2\}$ is dense in $\T^2$.
Hence the action $\alpha^A: \Z^2 \longrightarrow \Homeo(\T^2)$
is minimal.
\end{enumerate}
\end{lemma}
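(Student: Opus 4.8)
The plan is to reduce both assertions to properties of the single real scalar $t_{(m,n)}=\frac{1}{1-r_A^2}\langle(m,n)|v_1\rangle$, through which $\alpha^A_{(m,n)}$ acts as the translation $x\mapsto x+t_{(m,n)}v_u$ on $\T^2$. Two preliminary observations drive everything. First, since $v_u,v_s$ are eigenvectors for the distinct eigenvalues $\lambda_u\neq\lambda_s$ they are linearly independent, hence $|r_A|=|\langle v_u|v_s\rangle|<1$ and $1-r_A^2>0$, so $t_{(m,n)}$ is well defined and $v_1=v_u-r_A v_s\neq0$. Second, a one-line computation gives $\langle v_1|v_s\rangle=\langle v_u|v_s\rangle-r_A\langle v_s|v_s\rangle=r_A-r_A=0$, i.e. $v_1\perp v_s$ (symmetrically $v_2\perp v_u$). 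I would also record that $u_1,u_2,s_1,s_2$ are all nonzero: if, say, $s_2=0$ then $v_s=(\pm1,0)$ would force $c=0$, against the standing hypothesis $c\neq0$, and the remaining cases are identical.

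For (i), suppose $\alpha^A_{(m,n)}(x)=x$ in $\T^2$. Unwinding the definition modulo $\Z^2$ this says $t_{(m,n)}v_u\in\Z^2$. If $t_{(m,n)}\neq0$, then $t_{(m,n)}v_u=(t_{(m,n)}u_1,t_{(m,n)}u_2)$ is a lattice point with both coordinates nonzero integers (as $u_1,u_2\neq0$), whose ratio is therefore rational; but that ratio equals the irrational slope $u_1/u_2$, a contradiction. Hence $t_{(m,n)}=0$, i.e. $\langle(m,n)|v_1\rangle=0$. Because $v_1\neq0$ and $v_1\perp v_s$ in $\mathbb{R}^2$, the line $\{v_1\}^\perp$ is exactly $\mathbb{R}v_s$, so $(m,n)=\mu v_s$ for some $\mu\in\mathbb{R}$; if $\mu\neq0$ the same rationality argument applied to the irrational slope $s_1/s_2$ gives a contradiction, so $\mu=0$ and $(m,n)=(0,0)$. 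This proves freeness.

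For (ii), I would describe the orbit of $x$ as $x+f(S)$, where $f\colon\mathbb{R}\to\T^2$ is the continuous homomorphism $f(t)=t v_u\bmod\Z^2$ and $S=\{t_{(m,n)}:(m,n)\in\Z^2\}$. Linearity of $\langle\,\cdot\,|v_1\rangle$ shows $S$ is the subgroup $\Z\gamma_1+\Z\gamma_2$ of $\mathbb{R}$ with $\gamma_i=(v_1)_i/(1-r_A^2)$. Since $v_1\perp v_s$ forces $v_1\parallel(-s_2,s_1)$, both $\gamma_i$ are nonzero and $\gamma_1/\gamma_2=-s_2/s_1$ is irrational, so $S$ is dense in $\mathbb{R}$. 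On the other hand, the irrationality of the slope $u_1/u_2$ makes the winding line $f(\mathbb{R})$ dense in $\T^2$. Combining these by continuity, $\overline{f(S)}\supseteq f(\overline{S})=f(\mathbb{R})$ and $\overline{f(\mathbb{R})}=\T^2$, so $\overline{f(S)}=\T^2$; translating by $x$ shows the orbit is dense, i.e. the action is minimal.

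The routine parts are the orthogonality identity and the ``irrational slope kills lattice points'' argument, which I expect to reuse verbatim in both items. The only real subtlety, and the step I would be most careful about, is the minimality: it is tempting to think a single irrationality suffices, but one genuinely needs two independent inputs --- the density of the countable subgroup $S$ inside $\mathbb{R}$ (from the slope of $v_s$, equivalently of $v_1$) and the density of the full line $f(\mathbb{R})$ inside $\T^2$ (from the slope of $v_u$) --- glued together by the elementary closure inclusion $f(\overline{S})\subseteq\overline{f(S)}$.
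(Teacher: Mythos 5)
Your argument is correct and follows essentially the same route as the paper: part (i) uses the irrationality of the slope of $v_u$ to force the translation vector into $(0,0)$ and then the orthogonality $v_1\perp v_s$ together with the irrational slope of $v_s$ to conclude $(m,n)=(0,0)$, and part (ii) combines the density of the subgroup $\Z\gamma_1+\Z\gamma_2$ in $\mathbb{R}$ (from the irrational slope of $v_1$) with the density of the winding line $\{x+tv_u\}$ in $\T^2$ (from the irrational slope of $v_u$), exactly as in the paper's proof. Your version merely makes explicit a few steps the paper leaves implicit, such as the closure inclusion $f(\overline{S})\subseteq\overline{f(S)}$ and the final deduction $(m,n)=(0,0)$ from $\langle(m,n)|v_1\rangle=0$.
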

\begin{proof}
(i) Suppose that 
$\alpha^A_{(m,n)}(x) = x$ for some $x \in \T^2,$
so that
$\frac{1}{1-r_A^2}\langle (m,n)|v_1\rangle v_u = (k,l)$
for some $(k,l) \in \Z^2.$
As the slope of the vector
$v_u$ is irrational, we have 
$(k,l) = (0,0)$ and hence $(m,n) =(0,0)$.

(ii) Let
$v_1 = (\gamma_1,\gamma_2).$
As the slope of $v_s$ is irrational and
$\langle v_s | v_1\rangle =0,$
the slope $\frac{\gamma_1}{\gamma_2}$ 
 of $v_1$ is irrational, so that 
the set $\{ m\gamma_1 + n \gamma_2 | m,n \in \Z\}$
is dense in $\mathbb{R}.$
Since
$\langle (m,n)  | v_1\rangle v_u  =  (m\gamma_1 + n \gamma_2 )v_u$
and the set 
$\{ x + t v_u \in \T^2 | t \in \mathbb{R}\}$ is dense in $\T^2,$
we see that 
the set 
\begin{equation*}
\{ x + \frac{1}{1-r_A^2}\langle (m,n)|v_1\rangle v_u \mid (m,n) \in \Z^2 \}
\end{equation*}
is dense in $\T^2.$
\end{proof}
The action $\alpha^A:\Z^2\longrightarrow \Homeo(\T^2)$
induces an action of $\Z^2$
to the automorphism group $\Aut(C(\T^2))$
of $C(\T^2)$ by
$f \in C(\T^2) \longrightarrow f\circ\alpha^A_{(m,n)} \in C(\T^2).
$
We write it still $\alpha^A$
without confusing.

If a discrete group $\Gamma$ acts freely on a compact Hausdorff space $X$
by an action $\alpha:\Gamma \longrightarrow \Homeo(X),$
the set 
$\{(x,\alpha_\gamma(x)) \in X\times X | x \in X, \gamma \in \Gamma\}$
has a groupoid structure in a natural way
(cf.  \cite{AnanRenault}, \cite{Renault1},  \cite{Renault2}). 
The groupoid is called a transformation groupoid written 
$X\rtimes_\alpha\Gamma$.  
\begin{proposition}
The \'etale groupoid $G_A^a$ is isomorphic to the transformation groupoid
$$
\T^2 \rtimes_{\alpha^A}\Z^2 
=\{ (x, \alpha^A_{(m,n)}(x)) \in \T^2\times \T^2 |
(m,n) \in \Z^2\}
$$
defined by the action $\alpha^A:\Z^2\longrightarrow \Homeo(\T^2).$
Hence the $C^*$-algebra $C^*(G_A^a)$ of the groupoid $G_A^a$ is isomorphic to the 
crossed product $C(\T^2)\rtimes_{\alpha^A}\Z^2$ of $C(\T^2)$ by the action
$\alpha^A$ of $\Z^2.$   
\end{proposition}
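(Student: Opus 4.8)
The plan is to write down the obvious candidate isomorphism, read off its bijectivity and algebraic compatibility from the lemmas already established, and then do the real work of matching the two topologies. Define
\[
\phi : \T^2 \rtimes_{\alpha^A}\Z^2 \longrightarrow G_A^a,
\qquad
(x, (m,n)) \longmapsto (x, \alpha^A_{(m,n)}(x)),
\]
where I regard the transformation groupoid as $\T^2 \times \Z^2$ with the product topology, source and range maps $(x,(m,n)) \mapsto x$ and $(x,(m,n)) \mapsto \alpha^A_{(m,n)}(x)$, and multiplication $(x,(m,n))\cdot(\alpha^A_{(m,n)}(x),(k,l)) = (x,(m+k,n+l))$. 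That $\phi$ maps into $G_A^a$ and is onto is exactly the equivalence (i) $\Leftrightarrow$ (ii) of the first Lemma: a pair $(x,z)$ lies in $G_A^a$ if and only if $z = x + \tfrac{1}{1-r_A^2}\langle (m,n)|v_1\rangle v_u = \alpha^A_{(m,n)}(x)$ for some $(m,n)\in\Z^2$. Injectivity is where freeness enters: if $(x,\alpha^A_{(m,n)}(x)) = (x,\alpha^A_{(m',n')}(x))$ then $\alpha^A_{(m-m',n-n')}(x)=x$, so $(m,n)=(m',n')$ by Lemma \ref{lem:freeminimal} (i).

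Next I would verify that $\phi$ intertwines the groupoid operations. It is the identity on unit spaces under the identifications $(G_A^a)^{(0)} = \T^2 = (\T^2\rtimes_{\alpha^A}\Z^2)^{(0)}$. For a composable pair, the homomorphism property $\alpha^A_{(k,l)}\circ \alpha^A_{(m,n)} = \alpha^A_{(m+k,n+l)}$ recorded just before Lemma \ref{lem:freeminimal} gives
\[
\phi\bigl((x,(m,n))\cdot(\alpha^A_{(m,n)}(x),(k,l))\bigr)
= (x, \alpha^A_{(m+k,n+l)}(x))
= (x,\alpha^A_{(m,n)}(x))\,(\alpha^A_{(m,n)}(x), \alpha^A_{(m+k,n+l)}(x)),
\]
which is the product of $\phi(x,(m,n))$ and $\phi(\alpha^A_{(m,n)}(x),(k,l))$ in $G_A^a$; compatibility with inverses is checked the same way. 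Hence $\phi$ is an algebraic isomorphism of groupoids fixing $\T^2$.

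The main obstacle is topological. The groupoid $G_A^a$ carries Putnam's inductive limit (\'etale) topology of the asymptotic equivalence relation, whereas $\T^2\rtimes_{\alpha^A}\Z^2$ carries the product topology of $\T^2\times\Z^2$, and a priori neither need agree with the subspace topology from $\T^2\times\T^2$; the content is therefore to show that $\phi$ is a homeomorphism. My strategy is to describe both topologies through the sheets
\[
S_{(m,n)} := \{(x, \alpha^A_{(m,n)}(x)) : x \in \T^2\}, \qquad (m,n)\in\Z^2.
\]
By freeness the sheets are pairwise disjoint and cover $G_A^a$, and the source map restricts to a bijection of each $S_{(m,n)}$ onto $\T^2$. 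On the transformation-groupoid side $S_{(m,n)}$ is the clopen set $\T^2\times\{(m,n)\}$, on which $\phi$ is the homeomorphism $x \mapsto (x,\alpha^A_{(m,n)}(x))$. The delicate point is to verify that the inductive limit topology on $G_A^a$ also makes every $S_{(m,n)}$ an open bisection carrying the torus topology, so that $G_A^a = \bigsqcup_{(m,n)\in\Z^2} S_{(m,n)}$ as a topological groupoid over the discrete index set $\Z^2$. Granting this, $\phi$ is a homeomorphism on each sheet, hence globally, and thus an isomorphism of \'etale groupoids.

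Finally, the $C^*$-algebra statement is a formal consequence: for a discrete group acting on a compact Hausdorff space, the groupoid $C^*$-algebra of the associated transformation groupoid is canonically isomorphic to the crossed product, here $C(\T^2)\rtimes_{\alpha^A}\Z^2$ (cf. \cite{Renault1}, \cite{AnanRenault}). Transporting this identification along the isomorphism $\phi$ yields $C^*(G_A^a)\cong C(\T^2)\rtimes_{\alpha^A}\Z^2$.
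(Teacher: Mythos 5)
Your proposal is correct and follows essentially the same route as the paper: the set-level identification of $G_A^a$ with the transformation groupoid comes from the equivalence (i) $\Leftrightarrow$ (ii) of the first lemma together with freeness of the action (Lemma \ref{lem:freeminimal}), and the $C^*$-algebra statement is then quoted from the general theory of transformation groupoid $C^*$-algebras. You are in fact more explicit than the paper about the one delicate point --- that the inductive-limit topology on $G_A^a$ agrees with the disjoint-union-of-sheets topology --- which the paper passes over with the phrase ``identified in a natural way,'' though you too leave that verification granted rather than carried out.
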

\begin{proof}
By the preceding discussions, a pair $(x,z) \in \T^2$ 
belongs to the groupoid $G_A^a$ 
if and only if 
$z = \alpha^A_{(m,n)}(x)$ for some $(m,n) \in \Z^2.$
Since the action 
$\alpha^A: \Z^2 \longrightarrow \Homeo(\T^2)$
is free, the groupoid
$G_A^a$ is identified with the transformation groupoid
$
\T^2 \rtimes_{\alpha^A}\Z^2 
$
in a natural way.
By a general theory of the $C^*$-algebras of groupoids
(\cite{AnanRenault}, \cite{Renault1}), 
the $C^*$-algebra
$
C^*(\T^2 \rtimes_{\alpha^A}\Z^2) 
$
of the groupoid 
$
\T^2 \rtimes_{\alpha^A}\Z^2 
$
is isomorphic to the crossed product
$
C^*(\T^2) \rtimes_{\alpha^A}\Z^2. 
$
\end{proof}
\begin{remark}\label{re:2.4}
Define a map $\alpha_A: \Z^2\longrightarrow \T^2$ by
\begin{equation}
\alpha_A(m,n) :=  \frac{1}{1-r_A^2}\langle (m,n)|v_1\rangle v_u, \qquad
(m,n) \in \Z^2.
\end{equation}
It is easy to see that the  \'etale groupoid $G_A^a$ may be written
\begin{equation}
G_A^a = \T^2 \rtimes \alpha_A(\Z^2) \label{eq:groupoidaa}
\end{equation}
as a transformation groupoid.
\end{remark}
We set
\begin{gather}
\theta_1 : = \frac{u_1 s_2}{u_1 s_2 - u_2 s_1}, \qquad
\theta_2 : = \frac{u_2 s_2}{u_1 s_2 - u_2 s_1}, \label{eq: deftheta12}\\
\theta_3 : = \frac{-u_1 s_1}{u_1 s_2 - u_2 s_1}, \qquad
\theta_4 : = \frac{-u_2 s_1}{u_1 s_2 - u_2 s_1}.\label{eq: deftheta34}
\end{gather}
\begin{lemma}\label{lem:thetazeta}
The real numbers $\theta_i, i=1,2,3,4$ satisfy
\begin{gather}
\frac{\theta_2}{\theta_1} = \frac{\theta_4}{\theta_3} = \frac{u_2}{u_1},\qquad
\frac{\theta_1}{\theta_3} = \frac{\theta_2}{\theta_4} = -\frac{s_2}{s_1}, \label{eq:thetas}\\
\theta_1 + \theta_4 = 1. \label{eq:theta14}
\end{gather}
Conversely, if real numbers 
$\zeta_i, i=1,2,3,4$ satisfy
\begin{gather}
\frac{\zeta_2}{\zeta_1} = \frac{\zeta_4}{\zeta_3} = \frac{u_2}{u_1},\qquad
\frac{\zeta_1}{\zeta_3} = \frac{\zeta_2}{\zeta_4} = -\frac{s_2}{s_1}, \label{eq:zetas}\\
\zeta_1 + \zeta_4 = 1 \label{eq:zeta14},
\end{gather}
then we have $\zeta_i = \theta_i, i=1,2,3,4.$
\end{lemma}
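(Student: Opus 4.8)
The plan is to handle the two assertions separately, both by direct computation organized around the common denominator $D := u_1 s_2 - u_2 s_1$. Note first that $D \neq 0$, since $v_u$ and $v_s$ are eigenvectors for the distinct eigenvalues $\lambda_u \neq \lambda_s$ and are therefore linearly independent; this guarantees every quotient below is well defined.

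For the forward direction I would simply substitute the definitions \eqref{eq: deftheta12}--\eqref{eq: deftheta34}. Each of the four ratios in \eqref{eq:thetas} cancels the factor $D$ together with one of $s_1, s_2, u_1, u_2$: for instance $\theta_2/\theta_1 = (u_2 s_2)/(u_1 s_2) = u_2/u_1$ and $\theta_1/\theta_3 = (u_1 s_2)/(-u_1 s_1) = -s_2/s_1$, with the remaining two ratios treated identically. The normalization \eqref{eq:theta14} is then immediate from $\theta_1 + \theta_4 = (u_1 s_2 - u_2 s_1)/D = 1$.

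For the converse the underlying idea is that the ratio relations \eqref{eq:zetas} determine the quadruple $(\zeta_1,\zeta_2,\zeta_3,\zeta_4)$ only up to a single overall scalar, and the affine condition \eqref{eq:zeta14} then pins that scalar down. Concretely, I would express everything in terms of $\zeta_1$: the relations give $\zeta_2 = (u_2/u_1)\zeta_1$ and $\zeta_3 = -(s_1/s_2)\zeta_1$, and hence $\zeta_4 = (u_2/u_1)\zeta_3 = -\tfrac{u_2 s_1}{u_1 s_2}\,\zeta_1$. One should observe here that the remaining ratio condition $\zeta_2/\zeta_4 = -s_2/s_1$ is then automatically satisfied, so the four equations are not overdetermined and genuinely leave one free parameter. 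Substituting into \eqref{eq:zeta14} gives $\zeta_1\bigl(1 - \tfrac{u_2 s_1}{u_1 s_2}\bigr) = 1$, that is $\zeta_1 \cdot D/(u_1 s_2) = 1$, whence $\zeta_1 = (u_1 s_2)/D = \theta_1$. Back-substituting into the expressions for $\zeta_2,\zeta_3,\zeta_4$ recovers $\zeta_2 = \theta_2$, $\zeta_3 = \theta_3$, and $\zeta_4 = \theta_4$.

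There is no serious obstacle in this lemma; it is essentially solving a homogeneous ratio system supplemented by one inhomogeneous normalization. The only point deserving a moment's care is the consistency check in the converse—verifying that the ratio relations cut out a one-dimensional solution set rather than forcing a contradiction—after which \eqref{eq:zeta14} selects the unique point coinciding with $(\theta_1,\theta_2,\theta_3,\theta_4)$.
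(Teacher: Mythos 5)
Your proposal is correct and follows essentially the same route as the paper: the forward direction is immediate substitution, and the converse uses the ratio relations to express all $\zeta_i$ as multiples of one of them and then invokes $\zeta_1+\zeta_4=1$ to fix the scalar (you solve for $\zeta_1$ first, the paper for $\zeta_4$, which is an immaterial difference). Your added remark that the fourth ratio condition is automatically consistent, and the implicit use of the fact that $u_1,u_2,s_1,s_2$ are all nonzero (which follows from the irrationality of the slopes noted earlier in the paper, not merely from $u_1s_2-u_2s_1\neq 0$), are fine.
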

\begin{proof}
The identities \eqref{eq:thetas} and \eqref{eq:theta14} are immediate.
Conversely, suppose that 
real numbers 
$\zeta_i, i=1,2,3,4$ satisfy
\eqref{eq:zetas} and \eqref{eq:zeta14}.
As $\zeta_1 = \frac{u_2}{u_1}\zeta_2 =\frac{u_2}{u_1}( -\frac{s_2}{s_1}) \zeta_4,$
the equality \eqref{eq:zeta14} implies
$$
\{ \frac{u_2}{u_1}( -\frac{s_2}{s_1})  +1 \}\zeta_4 =1,
$$
 so that 
$$
\zeta_4  = \frac{-u_2 s_1}{u_1 s_2 - u_2 s_1}
$$
and hence 
\begin{equation*}
\zeta_1  = \frac{u_1 s_2}{u_1 s_2 - u_2 s_1}, \qquad
\zeta_2  = \frac{u_2 s_2}{u_1 s_2 - u_2 s_1}, \qquad
\zeta_3  = \frac{-u_1 s_1}{u_1 s_2 - u_2 s_1}.
\end{equation*}
\end{proof}

\begin{proposition}\label{prop:alpha1001}
For $x =(x_1, x_2) \in \T^2$, we have 
\begin{equation*}
\alpha^A_{(1,0)}(x_1,x_2) = (x_1+\theta_1, x_2+\theta_2), \qquad
\alpha^A_{(0,1)}(x_1,x_2) = (x_1+\theta_3, x_2+\theta_4),
\end{equation*}
and hence 
\begin{equation*}
\alpha^A_{(m,n)}(x_1,x_2) = (x_1+m\theta_1+n\theta_3, x_2+m\theta_2 +n\theta_4)
\quad
\text{ for }
(m,n) \in \Z^2.
\end{equation*}
\end{proposition}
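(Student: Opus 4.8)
The plan is to compute the two generating homeomorphisms $\alpha^A_{(1,0)}$ and $\alpha^A_{(0,1)}$ directly from the definition and then obtain the general formula by linearity of the inner product. Writing $v_1 = v_u - r_A v_s = (u_1 - r_A s_1,\, u_2 - r_A s_2)$, I read off $\langle (1,0)|v_1\rangle = u_1 - r_A s_1$ and $\langle (0,1)|v_1\rangle = u_2 - r_A s_2$, so that the displacement vectors of the two generators are
\[
\alpha^A_{(1,0)}(x) - x = \frac{u_1 - r_A s_1}{1 - r_A^2}\, v_u,
\qquad
\alpha^A_{(0,1)}(x) - x = \frac{u_2 - r_A s_2}{1 - r_A^2}\, v_u .
\]
The task then reduces to matching each component of these two vectors against the numbers $\theta_i$ of \eqref{eq: deftheta12}--\eqref{eq: deftheta34}.

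The computational heart is a pair of algebraic simplifications that use only the normalization $u_1^2 + u_2^2 = s_1^2 + s_2^2 = 1$ together with $r_A = u_1 s_1 + u_2 s_2$. First I would establish the determinant identity
\[
1 - r_A^2 = (u_1 s_2 - u_2 s_1)^2 ,
\]
which follows by expanding $(u_1^2 + u_2^2)(s_1^2 + s_2^2) = 1$ and subtracting $r_A^2$. Second, substituting $r_A = u_1 s_1 + u_2 s_2$ and using $s_1^2 + s_2^2 = 1$ gives the factorizations
\[
u_1 - r_A s_1 = s_2\,(u_1 s_2 - u_2 s_1),
\qquad
u_2 - r_A s_2 = -\,s_1\,(u_1 s_2 - u_2 s_1).
\]
Feeding these into the displacement vectors, the common factor $u_1 s_2 - u_2 s_1$ cancels once against the denominator $(u_1 s_2 - u_2 s_1)^2$, and the two components of $\frac{u_1 - r_A s_1}{1 - r_A^2} v_u$ become exactly $\theta_1 = \frac{u_1 s_2}{u_1 s_2 - u_2 s_1}$ and $\theta_2 = \frac{u_2 s_2}{u_1 s_2 - u_2 s_1}$, while those of $\frac{u_2 - r_A s_2}{1 - r_A^2} v_u$ become $\theta_3$ and $\theta_4$. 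Here $u_1, u_2, s_1, s_2$ are all nonzero, since the slopes are irrational, so every division is legitimate.

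Finally, the formula for general $(m,n)$ is immediate. By linearity of the inner product in its first argument, $\langle (m,n)|v_1\rangle = m\langle (1,0)|v_1\rangle + n\langle (0,1)|v_1\rangle$, so the displacement $\frac{1}{1-r_A^2}\langle (m,n)|v_1\rangle v_u$ is $m$ times the $(1,0)$-displacement plus $n$ times the $(0,1)$-displacement, namely $(m\theta_1 + n\theta_3,\, m\theta_2 + n\theta_4)$. I do not expect any genuine obstacle: the whole proposition is a direct verification, and the only nonroutine ingredient is spotting the identity $1 - r_A^2 = (u_1 s_2 - u_2 s_1)^2$ together with the two factorizations, which reflect the geometric fact that $u_1 s_2 - u_2 s_1 = \pm\sin\phi$ and $r_A = \cos\phi$ for the angle $\phi$ between the unit eigenvectors $v_u$ and $v_s$.
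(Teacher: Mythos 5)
Your proof is correct. It shares its computational core with the paper's proof --- both start from the displacement vectors $\frac{u_i-r_As_i}{1-r_A^2}v_u$ and both use the factorizations $u_1-r_As_1=s_2(u_1s_2-u_2s_1)$ and $u_2-r_As_2=-s_1(u_1s_2-u_2s_1)$ --- but you close the argument differently. The paper never evaluates $1-r_A^2$; instead it computes the ratio $\xi_1/\xi_2=-s_2/s_1$ and separately verifies $\xi_1u_1+\xi_2u_2=1$, then invokes the uniqueness statement of Lemma \ref{lem:thetazeta} to identify the four products $\xi_iu_j$ with the $\theta_k$. You instead observe the Lagrange identity $1-r_A^2=(u_1s_2-u_2s_1)^2$ and cancel one factor of $u_1s_2-u_2s_1$ against the denominator, which produces the closed forms \eqref{eq: deftheta12}--\eqref{eq: deftheta34} directly and makes the proof self-contained (no appeal to Lemma \ref{lem:thetazeta}). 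Both are legitimate; yours is slightly more economical here, while the paper's route reuses a characterization lemma it needs anyway. One small remark: the nonvanishing you should cite for the cancellation is $u_1s_2-u_2s_1\neq 0$, which holds because $v_u$ and $v_s$ are eigenvectors for distinct eigenvalues and hence linearly independent; the nonvanishing of the individual coordinates $u_i,s_i$ is not actually needed for your computation.
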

\begin{proof}
We have
\begin{align*}
\alpha^A_{(m,n)}(x_1,x_2) 
= & (x_1,x_2) + \frac{1}{1-r_A^2}\langle (m,n)| v_u - r_A v_s\rangle v_u \\
= & (x_1,x_2) + \frac{1}{1-r_A^2}\langle (m,n)| (u_1 -r_A s_1, u_2 - r_A s_2)\rangle (u_1, u_2).
\end{align*}
In particular, for $(m,n) = (1,0), (0,1)$, we have 
\begin{align*}
\alpha^A_{(1,0)}(x_1,x_2) 
= & (x_1 + \frac{1}{1-r_A^2}(u_1 -r_A s_1)u_1, \,
      x_2 + \frac{1}{1-r_A^2}(u_1 -r_A s_1)u_2), \\
\alpha^A_{(0,1)}(x_1,x_2) 
= & (x_1 + \frac{1}{1-r_A^2}(u_2 -r_A s_2)u_1, \,
      x_2 + \frac{1}{1-r_A^2}(u_2 -r_A s_2)u_2).
\end{align*}
We put
$\xi_i = \frac{1}{1-r_A^2}(u_i -r_A s_i)$ for $i=1,2$ so that 
\begin{align}
\alpha^A_{(1,0)}(x_1,x_2) 
= & (x_1 + \xi_1 u_1, \,  x_2 + \xi_1 u_2), \label{eq:alpha10} \\
\alpha^A_{(0,1)}(x_1,x_2) 
= & (x_1 + \xi_2u_1, \, x_2 + \xi_2 u_2). \label{eq:alpha01}
\end{align}
We then have 
\begin{align*}
\xi_1 & = \frac{1}{1-r_A^2}\{u_1 -(u_1 s_1 + u_2 s_2) s_1\} 
          = \frac{1}{1-r_A^2}\{u_1( 1- s_1^2) -  u_2 s_2 s_1\} \\
       & 
          = \frac{1}{1-r_A^2} (u_1s_2 -  u_2 s_1) s_2 
\end{align*}
and  similarly 
\begin{align*}
\xi_2 & = \frac{1}{1-r_A^2}\{u_2 -(u_1 s_1 + u_2 s_2) s_2\} 
          = \frac{1}{1-r_A^2}\{u_2( 1- s_2^2) -  u_1 s_1 s_2\} \\
       & = \frac{1}{1-r_A^2} (u_2s_1 -  u_1 s_2) s_1. 
\end{align*}
Hence we have 
$\frac{\xi_1}{\xi_2} = - \frac{s_2}{s_1}.$
We also have 
\begin{align*}
\xi_1 u_1 + \xi_2 u_2 
= & \frac{1}{1-r_A^2}\{(u_1 - r_A s_1)u_1 +  (u_2 - r_A s_2) u_2 \} \\  
= & \frac{1}{1-r_A^2}\{u_1^2 + u_2^2 -  r_A(u_1  s_1 + u_2 s_2)  \} \\  
= & \frac{1}{1-r_A^2}( 1 - r_A^2)  =1.
\end{align*}
By Lemma \ref{lem:thetazeta}, we have
$\xi_1 u_1 = \theta_1, \, \xi_1 u_2 = \theta_2, \, \xi_2 u_1 = \theta_3, \, 
\xi_2 u_2 = \theta_4,$
proving  the desired assertion from the identities \eqref{eq:alpha10} and \eqref{eq:alpha01}.
\end{proof}
We will next express $\theta_i, i=1,2,3,4$ in terms of the matrix elements $a,b,c,d$ of $A$.
\begin{lemma}\label{lem:abcdlambda} The following identities hold.
 \begin{enumerate}
\renewcommand{\theenumi}{\roman{enumi}}
\renewcommand{\labelenumi}{\textup{(\theenumi)}}
\item 
\begin{gather*}
a\theta_1 + b \theta_2 = \lambda_u \theta_1, \qquad
a\theta_3 + b \theta_4 = \lambda_u \theta_3, \\
c\theta_1 + d \theta_2 = \lambda_u \theta_2, \qquad
c\theta_3 + d \theta_4 = \lambda_u \theta_4,
\end{gather*}
and hence 
\begin{equation}
a\theta_1 + b \theta_2 + c\theta_3 + d \theta_4 = \lambda_u. \label{eq:abcdu}
\end{equation} 
\item 
\begin{gather*}
a\theta_3 - b \theta_1 = \lambda_s \theta_3, \qquad
a\theta_4 - b \theta_2 = \lambda_s \theta_4, \\
c\theta_3 - d \theta_1 = -\lambda_s \theta_1, \qquad
c\theta_4 - d \theta_2 = -\lambda_s \theta_2,
\end{gather*}
and hence 
\begin{equation*}
a\theta_4 - b \theta_2 - c\theta_3 + d \theta_1 = \lambda_s. \label{eq:abcds}
\end{equation*} 
\end{enumerate}
\end{lemma}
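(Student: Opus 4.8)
The plan is to reduce the whole lemma to the two eigenvector equations and a single linear-algebra bookkeeping step. Writing $Av_u = \lambda_u v_u$ and $Av_s = \lambda_s v_s$ in coordinates gives the four scalar identities
\begin{gather*}
a u_1 + b u_2 = \lambda_u u_1, \qquad c u_1 + d u_2 = \lambda_u u_2, \\
a s_1 + b s_2 = \lambda_s s_1, \qquad c s_1 + d s_2 = \lambda_s s_2,
\end{gather*}
and these, together with $\theta_1 + \theta_4 = 1$ from Lemma \ref{lem:thetazeta}, are the only inputs I will use. Write $D := u_1 s_2 - u_2 s_1$ for the common denominator of all four $\theta_i$, so that each numerator in \eqref{eq: deftheta12}--\eqref{eq: deftheta34} is a product of one $u$-component and one $s$-component.

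For part (i) the key observation is that the pairs $(\theta_1,\theta_2)$ and $(\theta_3,\theta_4)$ share a common $s$-factor, which lets the row of $A$ act on the $u$-slot. Concretely, $a\theta_1 + b\theta_2 = D^{-1}(a u_1 + b u_2)s_2$, and substituting the first eigenvector identity collapses this to $D^{-1}\lambda_u u_1 s_2 = \lambda_u\theta_1$. The remaining three identities follow identically: the second eigenvector equation handles the $\theta_2,\theta_4$ rows, and the pair $(\theta_3,\theta_4)$ carries the common factor $-s_1$. The displayed sum $a\theta_1 + b\theta_2 + c\theta_3 + d\theta_4 = \lambda_u$ is then immediate from $\lambda_u\theta_1 + \lambda_u\theta_4 = \lambda_u(\theta_1 + \theta_4)$ and the normalization $\theta_1 + \theta_4 = 1$.

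Part (ii) is the mirror image: here I factor out the common \emph{u}-component and let $A$ act on the $s$-slot via the eigenvector equation for $v_s$. For example $a\theta_3 - b\theta_1 = D^{-1}(-u_1)(a s_1 + b s_2) = D^{-1}(-u_1)\lambda_s s_1 = \lambda_s\theta_3$, while the $c,d$ rows produce $c\theta_3 - d\theta_1 = D^{-1}(-u_1)(c s_1 + d s_2) = -\lambda_s\theta_1$, which explains the sign $-\lambda_s$ on the right. Summing $a\theta_4 - b\theta_2 = \lambda_s\theta_4$ with $d\theta_1 - c\theta_3 = \lambda_s\theta_1$ then yields $\lambda_s(\theta_1 + \theta_4) = \lambda_s$, again by Lemma \ref{lem:thetazeta}. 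The entire statement is therefore a direct verification; the only place demanding care is the bookkeeping of the signs attached to $s_1$ versus $s_2$ in the numerators of $\theta_3$ and $\theta_4$, and to $-u_1,-u_2$ in part (ii). I expect this sign-tracking to be the sole (and minor) obstacle, since there is no conceptual content beyond substituting the eigenvector relations.
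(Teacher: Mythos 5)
Your proof is correct and is essentially the paper's own argument: the paper simply packages the same observation in vector form, noting that $(\theta_1,\theta_2)$ and $(\theta_3,\theta_4)$ are scalar multiples of $v_u$ while $(\theta_3,-\theta_1)$ and $(\theta_4,-\theta_2)$ are scalar multiples of $v_s$, then applies the eigenvector relations and $\theta_1+\theta_4=1$ exactly as you do component-wise.
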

\begin{proof}
By the identities
\begin{gather*}
{\begin{bmatrix}
\theta_1\\
\theta_2
\end{bmatrix}}
=\frac{ s_2}{u_1 s_2 - u_2 s_1}
{\begin{bmatrix}
u_1\\
u_2
\end{bmatrix}}, \qquad
{\begin{bmatrix}
\theta_3\\
\theta_4
\end{bmatrix}}
=\frac{ -s_1}{u_1 s_2 - u_2 s_1}
{\begin{bmatrix}
u_1\\
u_2
\end{bmatrix}}, \\
{\begin{bmatrix}
\theta_3\\
-\theta_1
\end{bmatrix}}
=\frac{ -u_1}{u_1 s_2 - u_2 s_1}
{\begin{bmatrix}
s_1\\
s_2
\end{bmatrix}}, \qquad
{\begin{bmatrix}
\theta_4\\
-\theta_2
\end{bmatrix}}
=\frac{ -u_2}{u_1 s_2 - u_2 s_1}
{\begin{bmatrix}
s_1\\
s_2
\end{bmatrix}},
\end{gather*}
with $\theta_1 + \theta_4 =1$, we see the desired assertions.
\end{proof}
\begin{lemma}\hspace{6cm}
\begin{enumerate}
\renewcommand{\theenumi}{\roman{enumi}}
\renewcommand{\labelenumi}{\textup{(\theenumi)}}
\item 
$(a\theta_1 + b \theta_2) \theta_4 = (c\theta_3 + d \theta_4) \theta_1.$
\item 
$(a\theta_3 - b \theta_1) \theta_2 = (-c\theta_4 + d \theta_2) \theta_3.$
\end{enumerate}
Hence we have 
\begin{equation*}
b \theta_2 = c \theta_3.
\end{equation*}
\end{lemma}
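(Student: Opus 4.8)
The plan is to obtain both identities (i) and (ii) as one-line substitutions into Lemma \ref{lem:abcdlambda}, and then to extract the equality $b\theta_2 = c\theta_3$ by expanding the two identities and combining them with the elementary relations among the $\theta_i$ already recorded.

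For (i) I would use the first and fourth relations of Lemma \ref{lem:abcdlambda}(i): since $a\theta_1 + b\theta_2 = \lambda_u\theta_1$ and $c\theta_3 + d\theta_4 = \lambda_u\theta_4$, both sides of (i) equal $\lambda_u\theta_1\theta_4$, so they agree. Similarly, for (ii) the first relation of Lemma \ref{lem:abcdlambda}(ii) gives $a\theta_3 - b\theta_1 = \lambda_s\theta_3$, while the fourth relation $c\theta_4 - d\theta_2 = -\lambda_s\theta_2$ rearranges to $-c\theta_4 + d\theta_2 = \lambda_s\theta_2$; hence both sides of (ii) equal $\lambda_s\theta_2\theta_3$. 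Thus (i) and (ii) hold.

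To derive the final equality I would expand (i) and (ii) as
\[
a\theta_1\theta_4 + b\theta_2\theta_4 = c\theta_1\theta_3 + d\theta_1\theta_4, \qquad a\theta_2\theta_3 - b\theta_1\theta_2 = -c\theta_3\theta_4 + d\theta_2\theta_3,
\]
which rearrange to $b\theta_2\theta_4 - c\theta_1\theta_3 = (d-a)\theta_1\theta_4$ and $c\theta_3\theta_4 - b\theta_1\theta_2 = (d-a)\theta_2\theta_3$. The crucial observation is that $\theta_1\theta_4 = \theta_2\theta_3$, which follows immediately from the relation $\frac{\theta_2}{\theta_1} = \frac{\theta_4}{\theta_3}$ in \eqref{eq:thetas} (Lemma \ref{lem:thetazeta}) by cross-multiplication. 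Hence the two right-hand sides coincide, and equating the left-hand sides gives $b\theta_2\theta_4 + b\theta_1\theta_2 = c\theta_1\theta_3 + c\theta_3\theta_4$, that is $b\theta_2(\theta_1 + \theta_4) = c\theta_3(\theta_1 + \theta_4)$. Finally $\theta_1 + \theta_4 = 1$ from \eqref{eq:theta14} yields $b\theta_2 = c\theta_3$.

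I do not anticipate a real obstacle: parts (i) and (ii) are direct substitutions, and the remaining work is bookkeeping. The one non-mechanical point is recognizing that the quadratic relation $\theta_1\theta_4 = \theta_2\theta_3$ is exactly what forces the $a$- and $d$-contributions to cancel, after which the normalization $\theta_1 + \theta_4 = 1$ factors out and leaves the clean conclusion $b\theta_2 = c\theta_3$.
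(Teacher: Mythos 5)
Your proof is correct and takes essentially the same route as the paper: parts (i) and (ii) follow by substituting the first and fourth identities of Lemma \ref{lem:abcdlambda}, and the conclusion comes from combining them with $\theta_1\theta_4=\theta_2\theta_3$. You also make explicit the use of $\theta_1+\theta_4=1$ to factor out and cancel, which the paper's very terse proof leaves implicit; this is a faithful filling-in rather than a different argument.
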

\begin{proof}
(i) By the first and the fourth identities in Lemma \ref{lem:abcdlambda} (i),
we know the identity (i). 
The identities of (ii) is similarly shown to those of (i).
By (i) and (ii) with the identity $\theta_1 \theta_4 = \theta_2 \theta_3$,
we get
$b \theta_2 = c \theta_3.
$
\end{proof}
Recall that $\Delta(A)$ denotes the discriminant 
$(a + d)^2 - 4(a d -b c)$
of the characteristic polynomial
of the matrix $A$.
The real number $\Delta(A)$ is positive because of the hyperbolicity of $A$.
By elementary calculations, we see the following lemma.
\begin{lemma}
The identities
\begin{gather*}
\theta_1\cdot \theta_4 = \theta_2\cdot \theta_3, \qquad 
\theta_1 +\theta_4 = 1,\\
(a\theta_1 + b \theta_2) \theta_4 = (c\theta_3 + d \theta_4) \theta_1, \qquad
(a\theta_3 - b \theta_1) \theta_2 = (-c\theta_4 + d \theta_2) \theta_3
\end{gather*}
imply
\begin{align}
 &(\theta_1, \theta_2,\theta_3,\theta_4) \\
=&
\begin{cases}
\left( \frac{1}{2}( 1 + \frac{|a-d|}{\sqrt{\Delta(A)}}),
\frac{|a-d|}{a-d}\frac{c}{\sqrt{\Delta(A)}},
\frac{|a-d|}{a-d}\frac{b}{\sqrt{\Delta(A)}},
\frac{1}{2}( 1 - \frac{|a-d|}{\sqrt{\Delta(A)}}) \right)  & \text{ or }\\
\left( \frac{1}{2}( 1 - \frac{|a-d|}{\sqrt{\Delta(A)}}),
-\frac{|a-d|}{a-d}\frac{c}{\sqrt{\Delta(A)}},
-\frac{|a-d|}{a-d}\frac{b}{\sqrt{\Delta(A)}},
\frac{1}{2}( 1 + \frac{|a-d|}{\sqrt{\Delta(A)}} ) \right)  & \text{ if  } a \ne d,\\
(\frac{1}{2}, \frac{1}{2}\sqrt{\frac{c}{b}},\frac{1}{2}\sqrt{\frac{b}{c}}, \frac{1}{2}) 
& \text{ or }\\ 
(\frac{1}{2}, -\frac{1}{2}\sqrt{\frac{c}{b}},-\frac{1}{2}\sqrt{\frac{b}{c}}, \frac{1}{2}) 
& \text{ if } a =d.
\end{cases} \label{eq:2.15}
\end{align}
\end{lemma}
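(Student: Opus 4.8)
The plan is to solve the given system for $(\theta_1,\theta_2,\theta_3,\theta_4)$ by first cutting the number of unknowns down to essentially one. The relation $\theta_1 + \theta_4 = 1$ lets me write $\theta_4 = 1-\theta_1$, and the preceding lemma already extracts $b\theta_2 = c\theta_3$ from the two product identities, so (since $c\ne 0$) I may set $\theta_3 = \frac{b}{c}\theta_2$. Feeding $\theta_3 = \frac{b}{c}\theta_2$ into $\theta_1\theta_4 = \theta_2\theta_3$ produces the single quadratic constraint $\theta_1\theta_4 = \frac{b}{c}\theta_2^2$, which ties the product $\theta_1\theta_4$ to $\theta_2$ alone.

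Next I would exploit the first product identity $(a\theta_1 + b\theta_2)\theta_4 = (c\theta_3 + d\theta_4)\theta_1$. Expanding it, substituting $c\theta_3 = b\theta_2$, and using $\theta_1\theta_4 = \theta_2\theta_3$ collapses it to $(a-d)\theta_1\theta_4 = b\theta_2(\theta_1 - \theta_4)$, where $\theta_1 - \theta_4 = 2\theta_1 - 1$ by the sum relation. Writing $D := \theta_1 - \theta_4$ and recalling $D^2 = (\theta_1+\theta_4)^2 - 4\theta_1\theta_4 = 1 - 4\theta_1\theta_4$, I would combine $(a-d)\theta_1\theta_4 = b\theta_2 D$ with $\theta_1\theta_4 = \frac{b}{c}\theta_2^2$ to eliminate $\theta_2$ and obtain a pure equation for $D$.

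When $a\ne d$, dividing $(a-d)\frac{b}{c}\theta_2^2 = b\theta_2 D$ by $b\theta_2$ (legitimate because $\theta_2\ne 0$ by its definition and the irrationality of the eigen-slopes) gives $\theta_2 = \frac{cD}{a-d}$; substituting into $D^2 = 1 - 4\theta_1\theta_4 = 1 - \frac{4bc}{(a-d)^2}D^2$ and clearing denominators yields $D^2\bigl[(a-d)^2 + 4bc\bigr] = (a-d)^2$. The key algebraic observation is $(a-d)^2 + 4bc = (a+d)^2 - 4(ad-bc) = \Delta(A)$, whence $D = \pm\frac{|a-d|}{\sqrt{\Delta(A)}}$; the two signs then reproduce exactly the two alternatives of \eqref{eq:2.15}, via $\theta_1 = \frac{1+D}{2}$, $\theta_4 = \frac{1-D}{2}$, $\theta_2 = \frac{cD}{a-d}$, and $\theta_3 = \frac{b}{c}\theta_2$. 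When $a=d$ the identity $(a-d)\theta_1\theta_4 = b\theta_2 D$ forces $b\theta_2 D = 0$, and since $b\ne 0$, $\theta_2\ne 0$ this gives $D=0$, i.e. $\theta_1 = \theta_4 = \frac12$; then $\theta_1\theta_4 = \frac14 = \frac{b}{c}\theta_2^2$ determines $\theta_2 = \pm\frac12\sqrt{\frac{c}{b}}$ (real because $\Delta(A) = 4bc > 0$ in this case) and $\theta_3 = \frac{b}{c}\theta_2 = \pm\frac12\sqrt{\frac{b}{c}}$, matching the last two lines of \eqref{eq:2.15}.

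The individual computations are elementary, so the only genuine obstacles are organizational: keeping the two-fold sign ambiguity in $D$ aligned consistently across all four coordinates so that it yields precisely the stated \emph{or}, recognizing the discriminant identity $(a-d)^2 + 4bc = \Delta(A)$ at the right moment, and justifying the divisions by $\theta_2$ (hence the appeal to $\theta_2\ne 0$) together with the separate, degenerate treatment of the case $a=d$.
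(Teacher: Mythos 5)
Your proposal is correct and complete. There is nothing in the paper to compare it against: the paper states this lemma with only the phrase ``by elementary calculations'' and supplies no argument, so your derivation fills a genuine gap. Your reduction --- use $\theta_1+\theta_4=1$ and the previously established $b\theta_2=c\theta_3$ to collapse the two product identities to $(a-d)\theta_1\theta_4=b\theta_2(\theta_1-\theta_4)$, then eliminate $\theta_2$ via $\theta_1\theta_4=\tfrac{b}{c}\theta_2^2$ and solve the resulting quadratic for $D=\theta_1-\theta_4$ using $(a-d)^2+4bc=\Delta(A)$ --- is exactly the natural way to carry out the computation, and your sign bookkeeping and the separate treatment of $a=d$ (where $\Delta(A)=4bc>0$ makes the square roots real) both check out. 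One point worth making explicit: as a pure algebraic implication the lemma is slightly overstated, since the four identities also admit the degenerate solutions $(\theta_1,\theta_2,\theta_3,\theta_4)=(1,0,0,0)$ and $(0,0,0,1)$, which are not on the list \eqref{eq:2.15}. You correctly excise these by invoking $\theta_2\neq 0$, but that fact comes from the definition \eqref{eq: deftheta12} of $\theta_2$ (the coordinates $u_2,s_2$ of the eigenvectors are nonzero because the eigen-slopes are irrational), not from the displayed identities themselves; it would be worth flagging that the hypothesis $\theta_2\neq 0$ is being imported from context. With that caveat recorded, the proof is sound.
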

We thus have the following theorem.
\begin{theorem}\label{thm:main1}
The $C^*$-algebra $C^*(G_A^a)$ of the groupoid $G_A^a$ for a hyperbolic matrix 
$A = 
\begin{bmatrix}
a & b \\
c & d
\end{bmatrix}
$
is isomorphic to the simple $C^*$-algebra  generated by four unitaries 
$U_1, U_2, V_1, V_2$ 
satisfying the following relations:
\begin{gather*}
U_1 U_2 = U_2 U_1,\qquad V_1 V_2 = V_2 V_1, \\
V_1 U_1 = e^{2\pi i \theta_1}U_1 V_1, \qquad V_1 U_2 = e^{2\pi i \theta_2}U_2 V_1, \\
V_2 U_1 = e^{2\pi i \theta_3}U_1 V_2, \qquad V_2 U_2 = e^{2\pi i \theta_4}U_2 V_2, 
\end{gather*}
where
\begin{equation}
\theta_1=\frac{1}{2}( 1 + \frac{a-d}{\sqrt{\Delta(A)}}), \quad
\theta_2 =\frac{c}{\sqrt{\Delta(A)}}, \quad
\theta_3 =\frac{b}{\sqrt{\Delta(A)}}, \quad
\theta_4=\frac{1}{2}( 1 - \frac{a-d}{\sqrt{\Delta(A)}}). \label{eq:2.18}
\end{equation}
Hence the $C^*$-algebra $C^*(G_A^a)$ is isomorphic to the four-dimensional non-commutative torus.
\end{theorem}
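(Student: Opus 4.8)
The plan is to produce the four unitaries directly from the crossed-product description established just above, $C^*(G_A^a)\cong C(\T^2)\rtimes_{\alpha^A}\Z^2$, and then to extract the six commutation relations from the explicit formula for $\alpha^A$ given in Proposition~\ref{prop:alpha1001}. First I would take $U_1,U_2\in C(\T^2)$ to be the coordinate unitaries $U_j(x_1,x_2)=e^{2\pi i x_j}$; these generate $C(\T^2)$ and commute, $U_1U_2=U_2U_1$, because $C(\T^2)$ is abelian. Then I would let $V_1,V_2$ be the canonical unitaries of the crossed product implementing the generators $(1,0),(0,1)$ of $\Z^2$. Since $\Z^2$ is abelian these satisfy $V_1V_2=V_{(1,1)}=V_2V_1$, and together with $U_1,U_2$ they generate $C(\T^2)\rtimes_{\alpha^A}\Z^2$.

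The key step is the covariance relation $V_{(m,n)}\,f\,V_{(m,n)}^*=f\circ\alpha^A_{(m,n)}$ for $f\in C(\T^2)$. Substituting $f=U_i$ and inserting the formulas $\alpha^A_{(1,0)}(x_1,x_2)=(x_1+\theta_1,x_2+\theta_2)$ and $\alpha^A_{(0,1)}(x_1,x_2)=(x_1+\theta_3,x_2+\theta_4)$ from Proposition~\ref{prop:alpha1001}, I would compute, for instance,
\[
V_1U_1V_1^*=U_1\circ\alpha^A_{(1,0)}=e^{2\pi i(x_1+\theta_1)}=e^{2\pi i\theta_1}U_1 ,
\]
that is $V_1U_1=e^{2\pi i\theta_1}U_1V_1$, and in the same way $V_1U_2=e^{2\pi i\theta_2}U_2V_1$, $V_2U_1=e^{2\pi i\theta_3}U_1V_2$, and $V_2U_2=e^{2\pi i\theta_4}U_2V_2$. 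These are precisely the six relations in the statement. By the universal property of the full crossed product, a covariant family of four unitaries obeying exactly these relations determines a representation of $C(\T^2)\rtimes_{\alpha^A}\Z^2$; since $\Z^2$ is amenable the full and reduced crossed products coincide. Hence $C^*(G_A^a)$ is canonically the universal $C^*$-algebra on these four unitaries, which is by definition the four-dimensional non-commutative torus with rotation parameters $(\theta_1,\theta_2,\theta_3,\theta_4)$.

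It remains to record simplicity and to insert the explicit values of the $\theta_i$. Simplicity follows from Lemma~\ref{lem:freeminimal}: the action $\alpha^A$ is free and minimal, and has a unique invariant ergodic measure, so the crossed-product theory quoted in the Introduction applies. For the parameters I would substitute the expressions \eqref{eq:2.15} from the previous lemma, choosing the branch that corresponds to the normalized eigenvectors $v_u,v_s$ fixed at the outset. Because $\tfrac{|a-d|}{a-d}=\sgn(a-d)$, this branch collapses to the single clean form \eqref{eq:2.18}, and the degenerate case $a=d$ is handled identically.

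All of these computations are routine; the genuine content of the result lives in the earlier chain of lemmas that express $\theta_1,\theta_2,\theta_3,\theta_4$ through $a,b,c,d$. Consequently the only two points that truly require care are, first, confirming that $C(\T^2)\rtimes_{\alpha^A}\Z^2$ carries no relations beyond the six listed --- which is exactly the universal property of the crossed product, not an extra argument --- and, second, the sign and branch bookkeeping: the covariance convention fixes the relations with $e^{+2\pi i\theta_j}$ rather than $e^{-2\pi i\theta_j}$, and one must track the two options in \eqref{eq:2.15} to land on the normalization of \eqref{eq:2.18}. I expect this sign/branch reconciliation, rather than any conceptual obstruction, to be the main place where slips could occur.
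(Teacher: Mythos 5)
Your proposal follows essentially the same route as the paper: identify $C^*(G_A^a)$ with $C(\T^2)\rtimes_{\alpha^A}\Z^2$ (as an iterated crossed product), take the coordinate unitaries $U_1,U_2$ and the implementing unitaries $V_1,V_2$, read the six commutation relations off Proposition~\ref{prop:alpha1001}, and get simplicity from freeness and minimality of $\alpha^A$. The one place the paper does a bit more work is in unifying the sign branches of \eqref{eq:2.15} into \eqref{eq:2.18}: it does this by relabelling generators (replacing $V_1,U_1$ by $V_2,U_2$, resp.\ by $V_1^*,U_1^*$), since the identity $\frac{|a-d|}{a-d}=\sgn(a-d)$ alone only makes the first branch coincide literally with \eqref{eq:2.18} when $a>d$.
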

\begin{proof}
As in Lemma \ref{lem:freeminimal}, the action 
$\alpha^A: \Z^2 \longrightarrow \Homeo(\T^2)$
is free and minimal, hence the $C^*$-crossed product
$C(\T^2)\rtimes_{\alpha^A} \Z^2$ is simple.
The $C^*$-crossed product
is canonically identified with the $C^*$-crossed product
$((C(\T)\otimes C(\T)) \rtimes_{\alpha^A_{(1,0)}}\Z) \rtimes_{\alpha^A_{(0,1)}}\Z. $
 Let $U_1, U_2 $ be the unitaries in $C(\T)\otimes C(\T)$ defined by
$U_1(t,s) = e^{2\pi i t}, U_2(t,s) = e^{2\pi i s}$.
Let $V_1, V_2$ be the implementing unitaries corresponding to the automorphisms
$\alpha^A_{(1,0)}, \alpha^A_{(0,1)},$ respectively. 
By Proposition \ref{prop:alpha1001}, 
we know the commutation relations 
among the unitaries $U_1, U_2, V_1, V_2$
for the slopes $\theta_1, \theta_2, \theta_3, \theta_4$
satisfying \eqref{eq:2.15}.
The second values of \eqref{eq:2.15} go to the first of \eqref{eq:2.15}
by substituting $V_1, U_1 $ with $V_2, U_2,$
respectively.
The forth values of \eqref{eq:2.15} go to the third of \eqref{eq:2.15}
by substituting $V_1, U_1 $ with $V_1^*, U_1^*,$
respectively.
When $a=d$, we have $\Delta(A) = 4 bc >0$ so that 
$\pm\sqrt{\frac{c}{b}} = \frac{c}{\sqrt{\Delta(A)}}, 
 \pm\sqrt{\frac{b}{c}} = \frac{b}{\sqrt{\Delta(A)}}.
$
Hence the first two of \eqref{eq:2.15} include the second two of \eqref{eq:2.15},
so that we may unify \eqref{eq:2.15} into \eqref{eq:2.18}.
\end{proof}
Since the $C^*$-algebra $C^*(G_A^a)$ is isomorphic to
 a simple four-dimensional non-commutative torus,
  we know the following proposition by Slawny \cite{Slawny} (see also  Putnam \cite{Putnam1}).
\begin{proposition}[{Slawny \cite{Slawny}, Putnam \cite{Putnam1}}]
The $C^*$-algebra $C^*(G_A^a)$ has a unique tracial state.
\end{proposition}
\begin{remark}
\begin{enumerate}
\renewcommand{\theenumi}{\roman{enumi}}
\renewcommand{\labelenumi}{\textup{(\theenumi)}}
\item 
We note that the simplicity of the algebra  $C^*(G_A^a)$
comes from a general theory of Smale space $C^*$-algebras as in 
\cite{Putnam1}, \cite{PutSp}
as well as a unique existence of tracial state on it.
It also follows from a general theory of crossed product $C^*$-algebras  
because the action $\alpha^A$ of $\Z^2$ to $\Homeo(\T^2)$ is free and minimal.
It has been shown that a simple higher dimensional non-commutative torus is an AT-algebra by Phillips \cite{Phillips}. 
\item
Suppose that 
 two hyperbolic matrices $A, B \in \GL(2,\Z)$ commute each other.
By \eqref{eq:groupoidaa}, the equality
$\alpha_A(\Z^2) = \alpha_B(\Z^2)$ 
holds for the commuting matrices $A$ and $B$,
because they have the same eigenvectors. 
Hence we know that $G_A^a =G_B^a$, 
so that the $C^*$-algebras 
$C^*(G_A^a)$ and $C^*(G_B^a)$ are isomorphic.
\end{enumerate}
\end{remark}

\section{The range  $\tau_*(K_0(C^*(G_A^a)))$}
In this section, we will describe the trace values 
$\tau_*(K_0(C^*(G_A^a)))$ of the $K_0$-group of the $C^*$-algebra $C^*(C_A^a)$
in terms of the hyperbolic matrix $A$.

In \cite{Rieffel}, M. A. Rieffel studied K-theory for irrational rotation $C^*$-algebras
$A_\theta$ with irrational numbers $\theta$,
which are called two-dimensional non-commutative tori, and proved that 
 $\tau_*(K_0(A_\theta)) = \Z + \Z\theta$ in $\mathbb{R},$
where $\tau$ is the unique tracial state on $A_\theta$. 
In \cite{Elliott}, G. A. Elliott ( cf. \cite{Boca}, \cite{Phillips}, \cite{Rieffel2}, \cite{Slawny}, etc.) 
initiated to study higher-dimensional non-commutative tori. 
It is well-known that the $K$-groups of the four-dimensional non-commutative torus 
are computed in \cite{Elliott}
such as 
\begin{equation*}
K_0(C(\T^2)\rtimes_{\alpha^A} \Z^2) 
\cong 
K_1(C(\T^2)\rtimes_{\alpha^A} \Z^2) 
\cong \Z^8  \qquad (\cite{Elliott}, cf. \cite{Slawny}).
\end{equation*} 
For $g =(a_1, b_1, a_2, b_2), h =(c_1, d_1, c_2, d_2) \in \Z^4,$
we define a wedge product  $g\wedge h \in \Z^4$  by
\begin{equation*}
(a_1, b_1, a_2, b_2) \wedge (c_1, d_1, c_2, d_2) 
= \left( 
\begin{vmatrix}
a_1 & c_1 \\
b_1 & d_1
\end{vmatrix},
\begin{vmatrix}
a_1 & c_1 \\
b_2 & d_2
\end{vmatrix},
\begin{vmatrix}
a_2 & c_2 \\
b_1 & d_1
\end{vmatrix},
\begin{vmatrix}
a_2 & c_2 \\
b_2 & d_2
\end{vmatrix}
\right)
\end{equation*} 
where 
$
\begin{vmatrix}
x & y \\
z & w
\end{vmatrix}
= 
xw - yz.
$
Let $\Theta =[\theta_{jk}]_{j,k=1}^4$ be a $4\times 4$ skew symmetric matrix over $\mathbb{R}.$
We regard the matrix $\Theta$ as a linear map from $\Z^4\wedge\Z^4$ 
to $\mathbb{R}$ by defining
$\Theta(x\wedge y)= \Theta x \cdot y. $  
Then 
$\Theta\wedge\Theta:(\Z^4\wedge\Z^4)\wedge(\Z^4\wedge\Z^4) 
= \wedge^4 \Z^4 \longrightarrow \mathbb{R}$
is defined by  
$$
(\Theta\wedge\Theta) (x_1\wedge x_2)\wedge(x_3\wedge x_4) =
\frac{1}{2!2!} \sum_{\sigma\in \frak{S}_4}
\sgn(\sigma) \Theta(x_{\sigma(1)}\wedge x_{\sigma(2)})\Theta(x_{\sigma(3)}\wedge x_{\sigma(4)})
$$
for
$ x_1, x_2, x_3, x_4 \in \Z^4.$
Although we may generally define 
$\wedge^n \Theta : \wedge^{2n}\Z^4 \longrightarrow \mathbb{R},$
the wedge product 
$\wedge^{2n}\Z^4 =0$ for $n >3,$ 
so that 
$$
\exp_\wedge(\Theta) = 1 \oplus \Theta \oplus \frac{1}{2}(\Theta\wedge\Theta)
\oplus \frac{1}{6}(\Theta\wedge\Theta\wedge\Theta) \oplus \cdots \,  : \, 
\wedge^{\operatorname{even}} \Z^4 \longrightarrow \mathbb{R}
$$
becomes 
$$
\exp_\wedge(\Theta) = 1 \oplus \Theta \oplus \frac{1}{2}(\Theta\wedge\Theta).
$$
Let $A_{\Theta}$ be the universal $C^*$-algebra generated by four unitaries
$u_j, j=1,2,3,4$ subject to the commutation relations
$u_j u_k = e^{2 \pi i \theta_{jk}}u_k u_j,\,  j,k=1,2,3,4.$
The $C^*$-algebra $A_\Theta$ is called the four-dimensional non-commutative torus
(\cite{Elliott}).
If $\Theta$ is non-degenerate, the algebra $A_\Theta$
has a unique tracial state written $\tau.$
By Elliott's result in \cite{Elliott}, 
there exists an isomorphism
$h: K_0(A_{\Theta}) \longrightarrow  \wedge^{\operatorname{even}} \Z^4$
such that 
$\exp_\wedge(\Theta) \circ h = \tau_*,$
so that we have
\begin{equation}
\exp_\wedge(\Theta)(\wedge^{\operatorname{even}} \Z^4) = \tau_*(K_0(A_\Theta)). \label{eq:expTheta}
\end{equation}
\begin{proposition}\label{prop:tauK}
Let $\tau$ be the unique tracial state on $C^*(G_A^a)$.
Let $\theta_1, \theta_2, \theta_3$ be real numbers defined by 
\eqref{eq:2.18}.
Then we have 
\begin{equation}
\tau_*(K_0(C^*(G_A^a))) = \Z + \Z\theta_1 +\Z\theta_2 +\Z\theta_3 \quad\text{ in } \mathbb{R}. 
\label{eq:tracevalue}
\end{equation}
\end{proposition}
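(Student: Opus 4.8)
The plan is to read off the skew-symmetric matrix $\Theta = [\theta_{jk}]_{j,k=1}^4$ that presents $C^*(G_A^a)$ as the four-dimensional non-commutative torus $A_\Theta$, and then to evaluate the Elliott map $\exp_\wedge(\Theta)$ of \eqref{eq:expTheta} on the three graded pieces of $\wedge^{\operatorname{even}}\Z^4 = \wedge^0\Z^4 \oplus \wedge^2\Z^4 \oplus \wedge^4\Z^4$. First, matching the ordering $u_1 = V_1$, $u_2 = U_1$, $u_3 = V_2$, $u_4 = U_2$ used in Section 3, the commutation relations of Theorem \ref{thm:main1} give $\theta_{12} = \theta_1$, $\theta_{14} = \theta_2$, $\theta_{23} = -\theta_3$, $\theta_{34} = \theta_4$, and $\theta_{13} = \theta_{24} = 0$, so that
\begin{equation*}
\Theta = \begin{pmatrix} 0 & \theta_1 & 0 & \theta_2 \\ -\theta_1 & 0 & -\theta_3 & 0 \\ 0 & \theta_3 & 0 & \theta_4 \\ -\theta_2 & 0 & -\theta_4 & 0 \end{pmatrix}.
\end{equation*}

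Since $\exp_\wedge(\Theta)$ respects the grading, $\tau_*(K_0(C^*(G_A^a)))$ is the sum inside $\mathbb{R}$ of the images of its three components. The degree-$0$ component is the identity on $\wedge^0\Z^4 = \Z$, contributing $\Z$. The degree-$2$ component is $\Theta$ itself, whose value on a basis bivector $e_j\wedge e_k$ is $\pm\theta_{jk}$; hence its image is the $\Z$-span of the six entries $\theta_{jk}$, namely $\Z\theta_1 + \Z\theta_2 + \Z\theta_3 + \Z\theta_4$. It remains to treat the degree-$4$ component $\tfrac12(\Theta\wedge\Theta)$, whose image is $\Z$ times the single number $\tfrac12(\Theta\wedge\Theta)(e_1\wedge e_2\wedge e_3\wedge e_4)$.

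The key computation is to evaluate this top term. Expanding the defining sum over $\mathfrak{S}_4$ and collecting the permutations according to the three pairings of $\{1,2,3,4\}$, one obtains the Pfaffian
\begin{equation*}
\tfrac12(\Theta\wedge\Theta)(e_1\wedge e_2\wedge e_3\wedge e_4) = \theta_{12}\theta_{34} - \theta_{13}\theta_{24} + \theta_{14}\theta_{23} = \theta_1\theta_4 - \theta_2\theta_3.
\end{equation*}
Here the relation $\theta_1\theta_4 = \theta_2\theta_3$ (established just before Theorem \ref{thm:main1}) forces this quantity to vanish, so the degree-$4$ component contributes nothing. Combining the three pieces gives $\tau_*(K_0(C^*(G_A^a))) = \Z + \Z\theta_1 + \Z\theta_2 + \Z\theta_3 + \Z\theta_4$, and finally $\theta_1 + \theta_4 = 1$ (see \eqref{eq:theta14}) shows $\Z\theta_4 \subseteq \Z + \Z\theta_1$, which yields \eqref{eq:tracevalue}.

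I expect the main obstacle to be the bookkeeping in the top-degree term: one must correctly extract the Pfaffian from the antisymmetrized double sum and then recognize that the specific skew-symmetric structure coming from $A$ — in particular the vanishing off-diagonal entries $\theta_{13} = \theta_{24} = 0$ together with $\theta_1\theta_4 = \theta_2\theta_3$ — makes it vanish. This vanishing is exactly what prevents a fourth independent generator of the form $\theta_1\theta_4$ from appearing in the trace range, so it is the crux of the statement rather than the routine contributions of the lower-degree pieces.
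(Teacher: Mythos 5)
Your proposal is correct and follows essentially the same route as the paper: both identify the skew-symmetric matrix $\Theta$ from the commutation relations of $U_1,U_2,V_1,V_2$ (you merely choose a different ordering of the four generators, which only permutes signs), apply Elliott's formula \eqref{eq:expTheta}, observe that the Pfaffian term $\theta_{12}\theta_{34}-\theta_{13}\theta_{24}+\theta_{14}\theta_{23}$ vanishes because $\theta_1\theta_4=\theta_2\theta_3$, and use $\theta_1+\theta_4=1$ to absorb $\Z\theta_4$ into $\Z+\Z\theta_1$. The only cosmetic difference is that you spell out the last absorption step, which the paper leaves implicit.
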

\begin{proof}
Take the unitaries 
$U_1, U_2, V_1, V_2$ and the real number $\theta_4$ together with
$\theta_1, \theta_2, \theta_3$ as in
Theorem \ref{thm:main1}.
We set the real numbers 
$\theta_{jk}, j,k=1,2,3,4$ such as 
$\theta_{jj} = \theta_{12} =\theta_{21} =\theta_{34} =\theta_{43} =0$ 
for $j=1,2,3,4$
and
$\theta_{13} = \theta_4,\,
 \theta_{14} = \theta_3,\,
 \theta_{23} = \theta_2,\,
\theta_{24} = \theta_1.
$ 
Let 
$u_1 = V_2, u_2 = V_1, u_3 = U_2, u_4 = U_1$
so that we have the commutation relations 
$$
u_j u_k = e^{2\pi i \theta_{jk}} u_k u_j, \qquad j,k = 1,2,3,4.
$$
As
$\theta_1 \cdot \theta_4 = \theta_2\cdot\theta_3,$
we have 
\begin{equation*}
\theta_{12} \theta_{34} -\theta_{13}\theta_{24} + \theta_{14}\theta_{23}
= 0.
\end{equation*}
By \eqref{eq:expTheta} or \cite{Elliott} (cf. \cite[2.21]{Boca}, \cite[Theorem 3.9]{Phillips}),
we have
\begin{align*}
\tau_*(K_0(C^*(G_A^a)))
= & \Z + \Z(\theta_{12} \theta_{34} -\theta_{13}\theta_{24} + \theta_{14}\theta_{23}
) + \sum_{1\le j < k\le 4}\Z\theta_{jk} \\
= & \Z + \Z\theta_1+ \Z\theta_2+ \Z\theta_3.
\end{align*}
\end{proof}
\begin{remark}
Suppose that two hyperbolic toral automorphisms $(\T^2, A)$ and $(\T^2,B)$ are
topologically conjugate.
We then know that both the $C^*$-algebras $C^*(G_A^a)$ and $C^*(G_B^a)$ are isomorphic.
Since they have   unique tracial states $\tau_A$ and $\tau_B$ respectively,
we see that 
 $$
 \tau_{A*}(K_0(C^*(G_A^a))) =
\tau_{B*}(K_0(C^*(G_B^a))).
$$
We may also find a matrix $M \in GL(2,\Z)$ such that 
$AM = MB$ by \cite{AP}.
We then directly see that the ranges
$
 \tau_{A*}(K_0(C^*(G_A^a)))
 $ and $
\tau_{B*}(K_0(C^*(G_B^a)))$
coincide by using the formula
\eqref{eq:tracevalue}.
Similarly we may directly show that the equality 
$
 \tau_{A*}(K_0(C^*(G_A^a)))
=
 \tau_{A^{-1}*}(K_0(C^*(G_{A^{-1}}^a)))
 $
by the formula \eqref{eq:tracevalue}. 
\end{remark}

\section{Examples}
In this section, we will present some examples. 

{\bf 1.}
$A = 
\begin{bmatrix}
1 & 1 \\
1 & 0
\end{bmatrix}.
$
Since $a =b=c= 1, d=0$, we have by Theorem \ref{thm:main1},
\begin{equation}
(\theta_1, \theta_2,\theta_3,\theta_4)
=(\frac{1}{2}(1+\frac{1}{\sqrt{5}}), \frac{1}{\sqrt{5}}, \frac{1}{\sqrt{5}}, 
\frac{1}{2}(5-\frac{1}{\sqrt{5}}). \label{eq:theta1234}
\end{equation}
It is easy to see that 
\begin{equation*}
\tau_*(K_0(C^*(G_A^a)))
=  \Z + \frac{5 + \sqrt{5}}{10}\Z.
\end{equation*}

\begin{proposition}
Let $A$ be the matrix 
$
\begin{bmatrix}
1 & 1 \\
1 & 0
\end{bmatrix}.
$
Put
$\theta = \frac{1}{2}(1+\frac{1}{\sqrt{5}}).$
Then the $C^*$-algebra
$C^*(G_A^a)$ is isomorphic to the tensor product $A_{\theta}\otimes A_{5\theta}$
between the irrational rotation $C^*$-algebras  
$A_\theta$ and $A_{5\theta}$
with its rotation angles $\theta$ and $5\theta$ respectively.
\end{proposition}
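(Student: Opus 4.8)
The plan is to produce, inside the four-dimensional noncommutative torus $C^*(G_A^a)$ of Theorem~\ref{thm:main1}, two mutually commuting copies of irrational rotation algebras that together generate the whole algebra, and then to upgrade the resulting surjection to an isomorphism by a simplicity argument. For this matrix one computes $\Delta(A)=5$, so by \eqref{eq:2.18} the slopes are $\theta_1=\theta=\frac12(1+\frac{1}{\sqrt5})$, $\theta_2=\theta_3=\frac{1}{\sqrt5}$ and $\theta_4=1-\theta$; the crucial numerical coincidence is $\theta_2=\theta_3=2\theta-1$. The first copy is immediate: since $V_1U_1=e^{2\pi i\theta}U_1V_1$ with $\theta$ irrational, the unital subalgebra $C^*(V_1,U_1)$ is isomorphic to $A_\theta$.

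For the second copy I would locate the relative commutant of $\{V_1,U_1\}$ among the monomials $V_1^{a_1}U_1^{b_1}V_2^{a_2}U_2^{b_2}$. Using the commutation rule $u_g u_h u_g^{-1}u_h^{-1}=\rho_\theta(g\wedge h)$ from Section~3, the requirement that such a monomial commute with both $U_1$ and $V_1$ reads $\theta_1 a_1+\theta_3 a_2\in\Z$ and $\theta_1 b_1+\theta_2 b_2\in\Z$; since $1$ and $\tfrac{1}{\sqrt5}$ are rationally independent, these force $a_1,b_1$ even with $a_2=-a_1/2$ and $b_2=-b_1/2$. The two primitive solutions single out
\[
R:=V_1^2V_2^{-1},\qquad S:=U_1^2U_2^{-1},
\]
and a direct check using $2\theta_1-\theta_2=2\theta_1-\theta_3=1$ confirms that $R$ and $S$ commute with $V_1$ and $U_1$. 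Evaluating $\rho_\theta$ on the corresponding lattice vectors $(2,0,-1,0)$ and $(0,2,0,-1)$ gives $4\theta_1-2\theta_2-2\theta_3+\theta_4\equiv-5\theta\pmod{\Z}$, whence $SR=e^{2\pi i\,5\theta}RS$; as $5\theta=\frac{5+\sqrt5}{2}$ is irrational, $C^*(R,S)\cong A_{5\theta}$.

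Finally I would verify that $V_1,U_1,R,S$ generate the whole algebra: from $V_2=V_1^2R^{-1}$ and $U_2=U_1^2S^{-1}$ one recovers the original four generators. Since $\{V_1,U_1\}$ and $\{R,S\}$ commute and satisfy the defining relations of $A_\theta$ and $A_{5\theta}$, the universal property of the rotation algebras together with the nuclearity of $A_\theta$ yields a $*$-homomorphism $A_\theta\otimes A_{5\theta}\to C^*(G_A^a)$, which is surjective by the generation statement. The main point—and the only genuine obstacle—is to rule out collapse onto a proper quotient, i.e.\ to see this surjection is injective. This follows because $\theta$ and $5\theta$ are irrational, so $A_\theta$ and $A_{5\theta}$ are simple and nuclear; hence $A_\theta\otimes A_{5\theta}$ is simple and any nonzero $*$-homomorphism out of it is injective. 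This gives the desired isomorphism $C^*(G_A^a)\cong A_\theta\otimes A_{5\theta}$.
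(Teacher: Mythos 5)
Your proof is correct and follows essentially the same route as the paper: both exhibit a monomial change of generators that splits the four-dimensional noncommutative torus into two commuting irrational rotation algebras generating the whole algebra, the paper taking the pairs $(V_2,U_2)$ and $(V_1V_2^2,\,U_1U_2^2)$ where you take $(V_1,U_1)$ and $(V_1^2V_2^{-1},\,U_1^2U_2^{-1})$. Your relative-commutant derivation of the second pair and your explicit injectivity argument via simplicity and nuclearity of $A_\theta\otimes A_{5\theta}$ are sound details that the paper leaves implicit.
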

\begin{proof}
Let $U_1, U_2, V_1, V_2$ be the generating unitaries in Theorem \ref{thm:main1}.
Since
$$
(\theta_1, \theta_2,\theta_3,\theta_4)
=(\theta, 2\theta -1,2\theta -1,1-\theta)
$$
by \eqref{eq:theta1234},
we have 
\begin{gather*}
U_1 U_2 = U_2 U_1,\qquad V_1 V_2 = V_2 V_1, \\
V_1 U_1 = e^{2\pi i \theta}U_1 V_1, \qquad V_1 U_2 = e^{2\pi i 2 \theta}U_2 V_1, \\
V_2 U_1 = e^{2\pi i 2\theta}U_1 V_2, \qquad V_2 U_2 = e^{-2\pi i \theta}U_2 V_2, 
\end{gather*}
We set
$$
u_1 = U_1 U_2^2, \qquad
u_2 = U_2, \qquad
v_1 = V_1 V_2^2, \qquad
v_2 =V_2.
$$
It is straightforward to see that the following equalities hold
\begin{gather*}
u_1 u_2 = u_2 u_1,\qquad v_1 v_2 = v_2 v_1, \\
v_1 u_1 = e^{2\pi i 5\theta}u_1 v_1, \qquad v_1 u_2 = u_2 v_1, \\
v_2 u_1 = u_1 v_2, \qquad v_2 u_2 = e^{-2\pi i \theta}u_2 v_2. 
\end{gather*}
Since the $C^*$-algebra $C^*(u_1, u_2, v_1, v_2)$ 
generated by $u_1, u_2, v_1, v_2$
coincides with $C^*(G_A^a),$
we have 
\begin{equation*}
C^*(G_A^a)\cong C^*(u_1, v_1) \otimes C^*(u_2, v_2) 
               \cong A_{5\theta} \otimes A_\theta. 
\end{equation*}
\end{proof}

\medskip

{\bf 2.}
$A = 
\begin{bmatrix}
3 & 1 \\
2 & 1
\end{bmatrix}.
$
Since $a =3, b=d= 1, d=2$, we have by Theorem \ref{thm:main1},
$$
(\theta_1, \theta_2,\theta_3,\theta_4)
=(\frac{3+\sqrt{3}}{6}, \frac{\sqrt{3}}{3}, \frac{\sqrt{3}}{6}, 
\frac{3-\sqrt{3}}{6})
$$
and
$$
\lambda_u = a \theta_1 + b \theta_2 + c\theta_3 + d \theta_4
= 2 + \sqrt{3}, \qquad
\lambda_s = a \theta_4 - b \theta_2 - c\theta_3 + d \theta_1
= 2 - \sqrt{3}.
$$
Since $\theta_4 = 1 -\theta_1$, 
$\theta_2 =  2 \theta_3,$
$\theta_1 = \frac{1}{2} + \theta_3$, 
the formula \eqref{eq:tracevalue} says that  
\begin{equation*}
\tau_*(K_0(C^*(G_A^a)))
=\Z + \Z\theta_1 + \Z \theta_2 + \Z \theta_3
= \frac{1}{2}\Z +  \frac{\sqrt{3}}{6}\Z.
\end{equation*}
\begin{proposition}\label{prop:example}
Let
$A_1 = 
\begin{bmatrix}
1 & 1 \\
1 & 0
\end{bmatrix}
$
and
$A_2 = 
\begin{bmatrix}
3 & 1 \\
2 & 1
\end{bmatrix}.
$
Then the $C^*$-algebra
$C^*(G_{A_1}^a)$ is not isomorphic to $C^*(G_{A_2}^a).$
\end{proposition}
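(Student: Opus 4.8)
The plan is to use the range of the unique trace on $K_0$ as the distinguishing invariant. First I would observe that $\tau_*(K_0(C^*(G_A^a)))$, regarded as a subgroup of $\mathbb{R}$, is an isomorphism invariant of the $C^*$-algebra. Indeed, if $\Phi\colon C^*(G_{A_1}^a)\to C^*(G_{A_2}^a)$ were a $*$-isomorphism, then since each algebra carries a unique tracial state (as established above), $\Phi$ necessarily intertwines the two traces; consequently the induced isomorphism $\Phi_*$ on $K_0$ carries $\tau_*(K_0(C^*(G_{A_1}^a)))$ bijectively onto $\tau_*(K_0(C^*(G_{A_2}^a)))$. As subsets of $\mathbb{R}$ these two ranges would then coincide, so it suffices to show they are distinct.

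Next I would simply invoke the two explicit computations already carried out for these matrices, namely $\tau_*(K_0(C^*(G_{A_1}^a))) = \Z + \frac{5+\sqrt5}{10}\Z$ for $A_1$ and $\tau_*(K_0(C^*(G_{A_2}^a))) = \frac12\Z + \frac{\sqrt3}{6}\Z$ for $A_2$. The whole problem thus reduces to verifying that these two subgroups of $\mathbb{R}$ are not equal.

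The key step is a field-theoretic separation. The first subgroup is contained in $\mathbb{Q}(\sqrt5)$ and the second in $\mathbb{Q}(\sqrt3)$. The generator $\frac{5+\sqrt5}{10}=\frac12+\frac{\sqrt5}{10}$ of the first group lies in $\mathbb{Q}(\sqrt5)\setminus\mathbb{Q}$, whereas $\mathbb{Q}(\sqrt5)\cap\mathbb{Q}(\sqrt3)=\mathbb{Q}$; a one-line check shows that $\sqrt5=p+q\sqrt3$ with $p,q\in\mathbb{Q}$ forces $pq=0$ and then leads to the impossibility $q^2=5/3$ or $\sqrt5\in\mathbb{Q}$. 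Hence $\frac{5+\sqrt5}{10}$ cannot belong to $\mathbb{Q}(\sqrt3)$, so it is not an element of the second subgroup, and the two trace ranges differ. Combined with the first paragraph, this yields the non-isomorphism.

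I do not expect any serious obstacle here: the argument is routine once the two trace ranges are available. The only point that genuinely must be handled with care is the claim that $\tau_*(K_0)$ is preserved under $*$-isomorphism, which relies entirely on the uniqueness of the tracial state proved earlier; everything after that is elementary algebra in quadratic fields.
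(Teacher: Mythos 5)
Your proposal is correct and follows essentially the same route as the paper: the range of the unique trace on $K_0$ is an isomorphism invariant, and the two explicitly computed ranges $\Z+\frac{5+\sqrt5}{10}\Z$ and $\frac12\Z+\frac{\sqrt3}{6}\Z$ are distinct. The only difference is that you actually justify the inequality of the two subgroups via $\mathbb{Q}(\sqrt5)\cap\mathbb{Q}(\sqrt3)=\mathbb{Q}$, a routine verification the paper simply asserts.
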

\begin{proof}
Since the algebra $C^*(G_A^a)$ has the  unique tracial state $\tau$,
the range $\tau_*(K_0(C^*(G_A^a)))$ of $\tau$ of the $K_0$-group $K_0(C^*(G_A^a))$
is invariant under isomorphism class of the $C^*$-algebra.    
As
$$
\tau_*(K_0(C^*(G_{A_1}^a))) = \Z +  \frac{5+\sqrt{5}}{10}\Z, \qquad
\tau_*(K_0(C^*(G_{A_1}^a)))=\frac{1}{2}\Z +  \frac{\sqrt{3}}{6}\Z,
$$
we see that
$
\tau_*(K_0(C^*(G_{A_1}^a))) \ne
\tau_*(K_0(C^*(G_{A_2}^a))),
$
so that 
the $C^*$-algebra
$C^*(G_{A_1})$ is not isomorphic to $C^*(G_{A_2}).$
\end{proof}


{\it Acknowledgments:}
This work was supported by JSPS KAKENHI Grant Numbers 15K04896, 19K03537.


\end{document}